\DeclareMathAlphabet{\mathpzc}{OT1}{pzc}{m}{it}
\newcommand{\Z}{\mathds Z}
\newcommand{\KK}{\mathds K}
\newcommand{\R}{\mathds R}
\newcommand{\C}{\mathds C}
\newcommand{\Hr}{\mathds H}
\newcommand{\Ca}{\mathds{C}\mathrm{a}}
\newcommand{\SO}{\mathsf{SO}}
\renewcommand{\O}{\mathsf O}
\newcommand{\SU}{\mathsf{SU}}
\newcommand{\U}{\mathsf{U}}
\newcommand{\Sp}{\mathsf{Sp}}
\newcommand{\Spin}{\mathsf{Spin}}
\newcommand{\G}{\mathsf{G}}
\newcommand{\K}{\mathsf{K}}
\renewcommand{\H}{\mathsf{H}}
\newcommand{\Sym}{\operatorname{S}(\wedge^2 V)}
\newcommand{\g}{\mathrm g}
\newcommand{\tr}{\operatorname{tr}}
\newcommand{\Gr}{\operatorname{Gr}_2}
\newcommand{\mm}{\mathfrak m}
\newcommand{\h}{\mathfrak h}
\renewcommand{\k}{\mathfrak k}
\renewcommand{\Im}{\operatorname{Im}}
\renewcommand{\Re}{\operatorname{Re}}
\newcommand{\diag}{\operatorname{diag}}
\newcommand*{\longhookrightarrow}{\ensuremath{\lhook\joinrel\relbar\joinrel\rightarrow}}
\renewcommand{\Sym}{\operatorname{Sym}}
\newcommand{\prs}{\mathpzc{p}_r(\vec s\hspace{1pt} )}
\newcommand{\Ad}{\operatorname{Ad}}
\renewcommand{\1}{\mathds 1}
\renewcommand{\i}{{\rm I}}
\renewcommand{\j}{{\rm J}}
\renewcommand{\k}{{\rm K}}
\renewcommand{\l}{{\rm L}}
\newcommand{\m}{{\rm M}}
\newcommand{\n}{{\rm N}}
\renewcommand{\o}{{\rm O}}
\newtheorem{theorem}{Theorem}[]
\newtheorem{proposition}[theorem]{Proposition}
\newtheorem{mainthm}{\sc Theorem}
\newtheorem*{convention}{Convention}
\theoremstyle{definition}
\theoremstyle{remark}
\newtheorem{remark}[theorem]{Remark}
\title[Flag manifolds with strongly positive curvature]{Flag manifolds with strongly positive curvature}
\author{Renato G. Bettiol}
\author{Ricardo A. E. Mendes}
\address{\begin{tabular}{lll}
University of Notre Dame & & Universit\"at M\"unster \\
Department of Mathematics & & Mathematisches Institut \\
255 Hurley Building & & Einsteinstr.\ 62 \\
Notre Dame, IN, 46556-4618, USA & & D-48149 M\"unster, Germany\\
\emph{E-mail address}: {\tt rbettiol@nd.edu} & & \emph{E-mail address}: {\tt mendes@uni-muenster.de}
\end{tabular}
}
\numberwithin{equation}{section}
\numberwithin{theorem}{section}
\thanks{The first named author is partially supported by the NSF grant DMS-1209387, USA. The second named author is supported by SFB878 Groups, Geometry \& Actions.}
\subjclass[2010]{53B20, 53C20, 53C21, 53C30, 53C35, 14M15}
\date{\today}
\begin{document}
\begin{abstract}
We obtain a complete description of the moduli spaces of homogeneous metrics with strongly positive curvature on the Wallach flag manifolds $W^6$, $W^{12}$ and $W^{24}$, which are respectively the manifolds of complete flags in $\C^3$, $\Hr^3$ and $\Ca^3$. Together with our earlier work~\cite{strongpos}, this concludes the classification of simply-connected homogeneous spaces that admit a homogeneous metric with strongly positive curvature.
\end{abstract}

\maketitle

\section{Introduction}

The study of smooth manifolds with positive sectional curvature ($\sec>0$) is a classical subject in Riemannian Geometry; however, surprisingly little is understood about such manifolds. On the one hand, there are very few known topological obstructions. On the other hand, there is also a dramatic scarcity of examples besides spheres and projective spaces (see \cite{bible} for a survey).
A natural approach to deal with the latter issue is to search for examples with many symmetries~\cite{grove-survey}. Perhaps the most compelling motivations for this approach are the achievements of the classification of compact simply-connected homogeneous spaces with $\sec>0$, due to Wallach~\cite{wa} in even dimensions, and B\'erard-Bergery~\cite{BB76} in odd dimensions,
see also Wilking and Ziller~\cite{wilking-ziller-hom}. In the process of this classification, Wallach~\cite{wa} discovered three new examples of closed manifolds with $\sec>0$, that are commonly referred to as the \emph{Wallach flag manifolds}:
\begin{equation*}
W^6=\SU(3)/\mathsf T^2, \quad W^{12}=\Sp(3)/\Sp(1)\Sp(1)\Sp(1), \;\; \text{ and }\;\; W^{24}=\mathsf F_4/\Spin(8).
\end{equation*}
These are the manifolds of complete flags in $\C^3$, $\Hr^3$ and $\Ca^3$ respectively, and are the total spaces of homogeneous sphere bundles over the corresponding projective planes $\C P^2$, $\Hr P^2$ and $\Ca P^2$, see Section~\ref{sec:flags} for details.

In this paper, we revisit the moduli spaces of positively curved homogeneous metrics on the Wallach flag manifolds (obtained by Valiev~\cite{valiev}), under the light of \emph{strongly positive curvature}. A Riemannian manifold $(M,\g)$ has strongly positive curvature if there exists $\omega\in\Omega^4(M)$ such that the modified curvature operator $(R+\omega)\colon\wedge^2 TM^*\to\wedge^2 TM^*$ is positive-definite.
A systematic study of this condition was initiated by the authors in \cite{strongpos}, propelled by the discovery that it is preserved under Riemannian submersions and Cheeger deformations. Its origins are reminiscent of the work of Thorpe~\cite{ThorpeJDG,Thorpe72}, and such notion was also used by authors including P\"uttmann~\cite{puttmann}, and Grove, Verdiani and Ziller~\cite{p2}, who coined the term.
The interest in this curvature condition is largely due to it being intermediate between $\sec>0$ and positive-definiteness of the curvature operator. We recall that, while many questions on manifolds with $\sec>0$ remain open, the only simply-connected manifolds to admit positive-definite curvature operator are spheres~\cite{BW}. Strongly positive curvature has the potential to improve the understanding of the gap between these classes. 

Our main result is a precise description of the moduli spaces of homogeneous metrics with strongly positive curvature on the Wallach flag manifolds. For the sake of brevity, we state it in terms of the moduli spaces of homogeneous metrics with $\sec>0$, originally obtained by Valiev~\cite{valiev}, see also P\"uttmann~\cite{puttmann,puttmann2}; however, we stress that our proof is independent of these previous results.

\begin{mainthm}\label{mainthm}
A homogenous metric on $W^6$ or $W^{12}$ has strongly positive curvature if and only if it has $\sec>0$. A homogenous metric on $W^{24}$ has strongly positive curvature if and only if it has $\sec>0$ and does not submerge onto $\Ca P^2$.
\end{mainthm}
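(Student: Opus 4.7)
I would treat the three Wallach flag manifolds uniformly via the reductive decomposition $\mathfrak{g}=\h\oplus\mm$, with $\mm=\mm_1\oplus\mm_2\oplus\mm_3$ the canonical splitting into three equidimensional isotropy summands of dimensions $2,4,8$ for $W^6, W^{12}, W^{24}$ respectively. Invariant metrics form the three-parameter family $\g_{\vec s}=s_1 Q|_{\mm_1}+s_2 Q|_{\mm_2}+s_3 Q|_{\mm_3}$, where $Q$ is the negative of the Killing form. The first step is to write the curvature operator $R_{\vec s}$ explicitly as a block operator on $\wedge^2\mm$ whose entries are rational functions of $\vec s$, and in parallel to describe the space of $\Ad(\H)$-invariant $4$-forms on $\mm$, which in every case contains a distinguished triple-product form coupling $\mm_1\otimes\mm_2\otimes\mm_3$.

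Strongly positive curvature for $\g_{\vec s}$ then becomes the linear-algebraic question of whether some invariant $\omega$ makes $R_{\vec s}+\omega$ positive definite on $\wedge^2\mm$. The ``only if" direction implying $\sec>0$ is immediate: any $4$-form satisfies $\omega(X,Y,X,Y)=0$ on decomposable $2$-vectors. For the ``if" direction on $W^6$ and $W^{12}$, the $\Ad(\H)$-invariant decomposition of $\wedge^2\mm$ together with the invariant $4$-form space afford enough freedom to exhibit, for every $\vec s$ in Valiev's positively curved region, an explicit $\omega$ whose coefficients are rational functions of $\vec s$ rendering $R_{\vec s}+\omega$ positive; the argument reduces to a small number of scalar inequalities that can be checked directly.

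For $W^{24}$ the same scheme works off a codimension-one locus. The metrics submerging onto $\Ca P^2$ correspond precisely to the equalities $s_i=s_j$ arising from the chain $\Spin(8)\subset\Spin(9)\subset\mathsf F_4$, in which $\mm_i\oplus\mm_j$ fuses into the irreducible spin representation of $\Spin(9)$; by triality there are three such walls. Along each wall I would show that a block of $R_{\vec s}+\omega$ on mixed vertical-horizontal $2$-vectors has a forced zero eigenvalue for every $\Ad(\Spin(8))$-invariant $\omega$, because the only equivariant couplings available cannot lift the kernel. Equivalently, one may invoke the Riemannian submersion preservation of strongly positive curvature from \cite{strongpos} to reduce the obstruction to the failure of strongly positive curvature on $(\Ca P^2,\g_{\mathrm{sym}})$. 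Off the submersion walls but inside Valiev's region, an explicit $\omega$ restores positivity as in the $W^6, W^{12}$ cases.

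The main obstacle will be the spectral analysis on $W^{24}$, where $\dim\wedge^2\mm=276$ and several $\Spin(8)$-isotypical components appear with multiplicity. Organizing the computation through the triality decomposition of $\wedge^2\mm$ into $\Spin(8)$-irreducibles, and using it to pin down the invariant $4$-form space, should reduce the positivity question to a manageable system of linear conditions on $\vec s$ and the coefficients of $\omega$, matching Valiev's $\sec>0$ region precisely except along the submersion walls.
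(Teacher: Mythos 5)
Your plan follows the paper's overall strategy quite closely: block-diagonalize $R+\omega$ using $\Ad(\H)$-equivariance and Schur's Lemma, work with a complete set of representatives, identify the $\Ad(\H)$-invariant $4$-forms, and for $W^{24}$ use the fact that Riemannian submersions preserve strongly positive curvature together with the negative result for $\Ca P^2$ from~\cite{strongpos} to rule out the submersion walls $s_i=s_j$. All of that is exactly right, including your identification of the walls with the fusion of two $8$-dimensional summands into the $16$-dimensional spin representation of $\Spin(9)$, and the fact that there are three such walls.

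The one step you gloss over is how positivity is actually achieved away from the obstructed locus, and this is where the paper does something you should not expect to be able to bypass. You propose to ``exhibit, for every $\vec s$ in Valiev's positively curved region, an explicit $\omega$ whose coefficients are rational functions of $\vec s$ rendering $R_{\vec s}+\omega$ positive.'' The paper does \emph{not} do this, and it is not clear that such a closed-form rational $\omega$ exists. The difficulty is that the three scalar parameters $a_r$ entering $\omega$ enter simultaneously in a scalar block (where increasing $a_r$ helps), a second scalar block (where decreasing $a_r$ helps), and as the three off-diagonal entries of an irreducible $3\times3$ block whose positivity cannot be decoupled into per-$r$ inequalities. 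The paper resolves this by a two-step first-order argument: first choose $\omega_0(\vec s)$, rational in $\vec s$, that exactly annihilates one family of scalar blocks and yields a positive-\emph{semidefinite} $R+\omega_0$ with a controlled (at most one-dimensional) kernel inside the $3\times3$ block; then pick $\omega'$ whose restriction to that kernel is positive-definite, so $\omega_0+\varepsilon\,\omega'$ works for small $\varepsilon>0$. Without this perturbation device, checking Sylvester's criterion for the full $3\times3$ block directly, with cross-terms in all three $a_r$, is significantly harder, particularly near the boundary $\prs=0$ and near the submersion walls of $W^{24}$.

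Your first proposed route for the $W^{24}$ obstruction (showing directly that every invariant $\omega$ leaves a zero eigenvalue on the wall) is also not what the paper does, and would be harder than the submersion argument you correctly offer as the alternative: it would require a negative statement about the entire $3$-dimensional space of invariant $4$-forms, rather than invoking a known structural result. The paper does prove a related direct fact only for the central point $s_1=s_2=s_3$ (the determinant of the first block is negative for any $\omega$), but not for walls where just two of the $s_r$ coincide; there it relies entirely on the submersion argument.

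In summary: the architecture of your proof is sound and matches the paper, but (i) you should replace ``find an explicit rational $\omega$'' with the perturbation scheme $\omega=\omega_0+\varepsilon\,\omega'$, as otherwise the positivity check does not reduce to a small number of independent scalar inequalities; and (ii) for the $W^{24}$ walls you should commit to the submersion argument rather than the direct eigenvalue argument, which is a nontrivial additional claim.
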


In the above statement, a homogeneous metric $\g$ on $W^{24}$ is said to \emph{submerge} onto $\Ca P^2$ if at least one of the $3$ natural bundle projections $(W^{24},\g)\to\Ca P^2$ is a Riemannian submersion, see Subsection~\ref{ssec:hommetrics}.


The main motivation for Theorem \ref{mainthm}, besides its intrinsic interest, is that it completes the classification of closed simply-connected homogeneous spaces that admit a homogeneous metric with strongly positive curvature.
Apart from $W^{24}$, which was left undecided, we proved in our previous work \cite[Thm.\ C]{strongpos} that the only closed simply-connected homogeneous space with $\sec>0$ that does not admit a homogeneous metric with strongly positive curvature is the Cayley plane $\Ca P^2$. 
By verifying that $W^{24}$ admits homogeneous metrics with strongly positive curvature, Theorem~\ref{mainthm} yields 
the following classification result:

\begin{mainthm}\label{classification}
All simply-connected homogeneous spaces with $\sec>0$ admit a homogeneous metric with strongly positive curvature, except for $\Ca P^2$.
\end{mainthm}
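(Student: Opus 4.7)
The plan is to deduce Theorem~\ref{classification} from Theorem~\ref{mainthm} combined with our previous paper \cite{strongpos}. By the Wallach--B\'erard-Bergery classification (\cite{wa,BB76}, see also \cite{wilking-ziller-hom}), the closed simply-connected homogeneous spaces admitting a homogeneous metric with $\sec>0$ form a short explicit list: round spheres, the projective spaces $\C P^n$, $\Hr P^n$, $\Ca P^2$, the two Berger spaces, the Aloff--Wallach family, and the three Wallach flags $W^6$, $W^{12}$, $W^{24}$. In \cite[Thm.~C]{strongpos} every entry other than $W^{24}$ was already inspected: all of them except $\Ca P^2$ admit homogeneous metrics with strongly positive curvature, while $\Ca P^2$ does not. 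Hence Theorem~\ref{classification} reduces to exhibiting a single homogeneous metric on $W^{24}$ with strongly positive curvature.

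This last step is an immediate consequence of Theorem~\ref{mainthm}. That theorem characterizes the strongly positively curved homogeneous metrics on $W^{24}$ as precisely those with $\sec>0$ that are not Riemannian submersions onto $\Ca P^2$ via any of the three natural bundle projections. Since Valiev's moduli space of positively curved homogeneous metrics on $W^{24}$ is an open multi-parameter region, while each of the three ``submerges onto $\Ca P^2$'' conditions is cut out by identifying two of the three natural scaling parameters that govern the left-invariant metric, the complement is manifestly non-empty. Any homogeneous metric in this complement is therefore strongly positively curved, which produces the required example and finishes the proof of Theorem~\ref{classification}.

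The real content thus lies in Theorem~\ref{mainthm}, not in Theorem~\ref{classification}. The implication ``strongly positive curvature implies $\sec>0$'' is definitional. The reverse implication is the hard part: for every positively curved homogeneous metric outside the submersion loci one must construct an explicit $4$-form $\omega\in\Omega^4(W^{24})$ rendering $R+\omega$ positive-definite, and for metrics on the submersion loci one must argue, invoking the failure of strong positivity on $\Ca P^2$ established in \cite{strongpos} together with the fact that strong positivity is preserved under Riemannian submersions, that no such $\omega$ can exist. Once Theorem~\ref{mainthm} is in place, Theorem~\ref{classification} follows as above.
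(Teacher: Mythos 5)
Your argument is correct and follows the same route as the paper: Theorem~\ref{classification} is deduced from \cite[Thm.~C]{strongpos} (which already settled every case except $W^{24}$, showing $\Ca P^2$ is the unique exception among those) together with Theorem~\ref{mainthm}, which exhibits homogeneous metrics on $W^{24}$ with strongly positive curvature. The only difference is that you spell out the Wallach--B\'erard-Bergery list and the non-emptiness of the complement of the submersion loci, which the paper leaves implicit.
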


Despite not admitting a homogeneous metric with strongly positive curvature, $\Ca P^2$ may carry \emph{nonhomogeneous} metrics with this property. It is worth stressing that it is not currently known if all Riemannian manifolds with $\sec>0$ admit a metric with strongly positive curvature, see \cite[Sec.\ 6]{strongpos}. In fact, the only currently known obstructions to strongly positive curvature are obstructions to $\sec>0$.

Let us give a brief outline of the strategy for the proof of Theorem~\ref{mainthm}. The starting point is to explicitly compute the modified curvature operators $(R+\omega)\colon\wedge^2 TW^\bullet\to\wedge^2 TW^\bullet$ of homogeneous metrics on the Wallach flag manifolds $W^\bullet$. Up to averaging using the isometric group action, $\omega$ can be assumed invariant. Thus, $R+\omega$ is an equivariant symmetric linear operator, and hence, by Schur's Lemma, has a certain block diagonal canonical form based on the decomposition of $\wedge^2 TW^\bullet$ into irreducible factors. In particular, $R+\omega$ can be tested for positive-definiteness using its matrix on a suitable basis of \emph{representatives}. The process to find $\omega$ such that $R+\omega$ is positive-definite employs a first-order argument consisting of two steps. The first step is to find $\omega_0$ such that $R+\omega_0$ is positive-semidefinite; and the second step is to find $\omega'$ with positive-definite restriction to the kernel of $R+\omega_0$. In this case, $\omega=\omega_0+\varepsilon\,\omega'$ satisfies the desired property for any sufficiently small $\varepsilon>0$. The only cases in which this process fails are those corresponding to normal homogeneous metrics, and the homogeneous metrics on $W^{24}$ that submerge onto $\Ca P^2$. These do not have strongly positive curvature as a consequence of \cite[Thm.\ A, C]{strongpos}.

As a byproduct of the above proof (specifically, the first step of finding $\omega_0$ in the first-order argument), we obtain a complete description of the moduli spaces of homogeneous metrics with strongly \emph{nonnegative} curvature on these manifolds:

\begin{mainthm}\label{strongnonneg}
A homogenous metric on $W^6$, $W^{12}$ or $W^{24}$ has strongly nonnegative curvature if and only if it has $\sec\geq0$.
\end{mainthm}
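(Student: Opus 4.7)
The forward direction is immediate. If $R+\omega$ is positive semidefinite for some $\omega\in\Omega^4(W^\bullet)$, then for every $2$-plane $\sigma=\operatorname{span}(v,w)$,
\begin{equation*}
\sec(\sigma)=\langle R(v\wedge w),v\wedge w\rangle=\langle (R+\omega)(v\wedge w),v\wedge w\rangle\geq 0,
\end{equation*}
since the total antisymmetry of $\omega$ forces $\omega(v,w,v,w)=0$ on every decomposable bivector.

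For the reverse direction, the plan is to reuse the averaging and Schur-block setup outlined in the introduction for Theorem~\ref{mainthm}. After averaging over the isometry group of $(W^\bullet,\g)$, the problem becomes: find an invariant $\omega_0\in\Omega^4(W^\bullet)^G$ such that $R+\omega_0\succeq 0$. By Schur's lemma, $R+\omega_0$ is block diagonal with respect to the isotypical decomposition of $\wedge^2\mm$, and each block is a symmetric matrix whose entries are polynomial in the metric parameters and in the finitely many invariant components of $\omega_0$. I would then produce, for each Wallach flag and each homogeneous metric in the closed semi-algebraic region $\{\sec\geq 0\}$ (described by Valiev~\cite{valiev}), an $\omega_0$ depending continuously on the metric parameters such that every Schur block is positive semidefinite. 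In the open stratum $\{\sec>0\}$ this is exactly what the first step of the first-order argument for Theorem~\ref{mainthm} already delivers; the new content of Theorem~\ref{strongnonneg} is that this $\omega_0$ can be extended continuously to the boundary while preserving semidefiniteness of each block, and the verification reduces to checking finitely many polynomial inequalities, inspected block by block.

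The main obstacle lies exactly at the boundary of $\{\sec\geq 0\}$, where $R$ is forced to acquire a kernel containing the zero-curvature decomposable bivectors and $\omega_0$ cannot lift these into the positive spectrum; one must check that the remaining Schur blocks do not turn indefinite in the limit. At generic boundary metrics this follows from direct specialization of the Theorem~\ref{mainthm} matrices, but the truly delicate cases are the ones excluded from Theorem~\ref{mainthm}, namely the normal homogeneous metrics on all three Wallach flags and the homogeneous metrics on $W^{24}$ that submerge onto $\Ca P^2$. For these I would appeal to the fact, established in \cite{strongpos}, that strong nonnegativity is preserved under Riemannian submersions and Cheeger deformations, together with the observation that each such metric arises, via one of these two constructions, from a bi-invariant metric on a compact Lie group; the latter has positive semidefinite curvature operator, so $\omega=0$ works trivially, and strong nonnegativity descends automatically. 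Combining the explicit block-by-block verification on the generic part of the boundary with this structural argument on the degenerate locus, one obtains an invariant $\omega_0$ for every $\sec\geq 0$ metric, completing the reverse implication.
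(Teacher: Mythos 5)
The forward direction is correct. For the reverse direction, the broad outline (average, Schur-block decompose, exhibit an explicit invariant $\omega_0$ with $R+\omega_0\succeq0$) matches the paper's strategy, and the paper indeed implements it via the single explicit choice \eqref{eq:ar}, which is a rational function of $\vec s$ making the third block vanish identically and the remaining blocks manifestly positive-semidefinite under $\prs\geq 0$. However, your case analysis contains a genuine conceptual error. You identify the ``truly delicate cases'' of the boundary of $\{\sec\geq 0\}$ with the metrics excluded from Theorem~\ref{mainthm} (normal homogeneous metrics, and metrics on $W^{24}$ that submerge onto $\Ca P^2$), but these are disjoint sets. The excluded metrics satisfy $\prs>0$ for all $r$ (e.g.\ at the normal metric $\prs=4s_r^2>0$; with $s_1=s_2=a$, $s_3=b$ one has $\mathpzc p_1=\mathpzc p_2=b^2>0$), so they sit in the \emph{interior} of $\{\sec\geq 0\}$, not on its boundary. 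They are precisely the places where the \emph{second} step of the first-order argument fails (no $\omega'$ positive on $\ker(R+\omega_0)$), not places where the first step fails; the explicit $\omega_0$ from the first step handles them directly, with no need for a separate structural argument.

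Moreover, the structural argument you propose for the fallback does not obviously cover all cases. For the normal homogeneous metric, $\omega=0$ on $(\G,Q)$ plus descent under the submersion $\G\to\G/\H$ does work. But for $W^{24}$ metrics with, say, $s_1=s_2=a$ and $a<s_3\leq\tfrac43a$ (which still have $\sec\geq 0$, since $\mathpzc p_3=s_3(4a-3s_3)\geq 0$), a Cheeger deformation of the normal metric along the $\K$-action can only \emph{shrink} the $V_3$-direction, yielding $s_3<a$; it does not produce $s_3>a$. So the claim that each such metric ``arises via one of these two constructions from a bi-invariant metric'' would require a further argument (e.g.\ exhibiting it as a quotient of a product metric in the style of a connection metric on the sphere bundle). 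The cleaner route, and the one the paper takes, is to avoid any case split: verify directly that the single $\omega_0$ of \eqref{eq:ar} makes all Schur blocks positive-semidefinite whenever $\prs\geq 0$ — the computation on $W^{24}$ suffices by the totally geodesic inclusions \eqref{eq:totgeod}, using \eqref{eq:secnonnegR2w24}, the vanishing of the third block, and Sylvester's criterion with the kernel vector \eqref{eq:specialv} for the first block.
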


This result motivates the question of which constructions of metrics with $\sec\geq0$ yield metrics with strongly nonnegative curvature, which will be addressed in \cite{strongnonneg}.

This paper is organized as follows. In Section~\ref{sec:strongpos}, we briefly recall the definition of strongly positive curvature and provide background on this condition for homogeneous spaces. Section~\ref{sec:flags} contains information on the Wallach flag manifolds and their algebraic structure. Modified curvature operators of homogeneous metrics on these manifolds are computed in Section~\ref{sec:curvops}, in terms of complete lists of representative vectors. Finally, in Section~\ref{sec:moduli}, we determine the above moduli spaces (see Propositions~\ref{prop:modulisec}, \ref{prop:strnneg} and \ref{prop:strpos}); which, in particular, prove Theorems~\ref{mainthm}, \ref{classification} and \ref{strongnonneg}.

\medskip
\noindent
{\bf Acknowledgements.}
It is a pleasure to thank Karsten Grove and Wolfgang Ziller for valuable suggestions. We also acknowledge the hospitality of the Mathematisches Forschungsinstitut Oberwolfach, where many results in this paper were proved.

\section{Strongly positive curvature}\label{sec:strongpos}

In this section, we briefly discuss the notion of modified curvature operators, leading to the definition of \emph{strongly positive curvature}; for details, see \cite{strongpos}. Furthermore, we recall the algebraic expression for curvature operators of homogeneous spaces, according to the formulation of P\"uttmann~\cite{puttmann}.

\subsection{Modified curvature operators}
Let $(M,\g)$ be a Riemannian manifold, and use the inner products induced by $\g$ to identify each $\wedge^k T_pM$ with its dual $\wedge^k T_pM^*$ for every $k$.
Denote by $\Sym^2(\wedge^2 T_pM)$ the space of symmetric linear operators $S\colon\wedge^2T_pM\to \wedge^2 T_pM$, and let $\mathfrak b\colon\Sym^2(\wedge^2 T_pM)\to\wedge^4 T_pM$ be the \emph{Bianchi map}, which associates to each $S\in\Sym^2(\wedge^2 T_pM)$ the $4$-form $\mathfrak b(S)\in\wedge^4 T_pM$ given by
\begin{equation*}
\mathfrak b(S)(X,Y,Z,W):=\tfrac13\Big(\langle S(X\wedge Y),Z\wedge W\rangle + \langle S(Y\wedge Z),X\wedge W\rangle + \langle S(Z\wedge X),Y\wedge W\rangle\Big).
\end{equation*}
The space $\wedge^4 T_pM$ is identified with a subspace of $\Sym^2(\wedge^2 T_pM)$, by
associating each $4$-form $\omega\in\wedge^4 T_pM$ with the operator $\omega\colon\wedge^2T_pM\to \wedge^2 T_pM$ given by
\begin{equation*}
\langle \omega(X\wedge Y),Z\wedge W\rangle:=\omega(X,Y,Z,W).
\end{equation*}
The orthogonal projection onto this subspace is given by the Bianchi map, which hence determines an orthogonal decomposition $\Sym^2(\wedge^2 T_pM)=\ker\mathfrak b\oplus\wedge^4 T_pM$.

The curvature operator $R\colon\wedge^2T_pM\to \wedge^2 T_pM$ is an element of $\ker\mathfrak b$, and the operators $(R+\omega)\in\Sym^2(\wedge^2 T_pM)$, that project onto $R$ under $\mathfrak b$, are called \emph{modified curvature operators}, following P\"uttmann~\cite{puttmann}. The crucial feature of modified curvature operators $R+\omega$ is that they induce the \emph{same} sectional curvature function
\begin{equation*}
\sec\colon\Gr(T_pM)\to\R, \quad \sec(X\wedge Y)=\langle R(X\wedge Y),X\wedge Y\rangle,
\end{equation*}
as the original curvature operator, since $\omega$ vanishes when restricted to $\Gr(T_pM)$.
Here, $\Gr(T_pM):=\{X\wedge Y\in\wedge^2 T_pM:\|X\wedge Y\|^2=1\}$ denotes the Grassmannian of (oriented) $2$-planes on $T_pM$.
In particular, if there exists $\omega\in\wedge^4 T_pM$ such that $(R+\omega)\in\Sym^2(\wedge^2 T_pM)$ is positive-definite, then the sectional curvature of any plane in $T_pM$ is positive. This is sometimes referred to as \emph{Thorpe's trick}.

The manifold $(M,\g)$ is said to have \emph{strongly positive curvature} (respectively, \emph{strongly nonnegative curvature}) if, for all $p\in M$, there exists a $4$-form $\omega\in\wedge^4 T_pM$ such that the modified curvature operator $(R+\omega)\colon\wedge^2 T_pM\to\wedge^2 T_pM$ is positive-definite (respectively, positive-semidefinite). By the above, this condition is clearly intermediate between positive-definiteness (respectively, positive-semidefiniteness) of the curvature operator and $\sec>0$ (respectively, $\sec\geq0$).

\subsection{Homogeneous spaces}\label{subsec:hommetrics}
Let $\G$ be a compact Lie group with bi-invariant metric $Q$, and $\H$ a closed subgroup. Denoting the corresponding Lie algebras by $\mathfrak g$ and $\mathfrak h$, let $\mathfrak m$ be the subspace such that $\mathfrak g=\mathfrak h\oplus\mathfrak m$ is a $Q$-orthogonal direct sum.
Recall that the tangent space to the homogeneous space $\G/\H$ at the identity class $[e]\in\G/\H$ is identified with $\mathfrak m$, and the isotropy representation of $\H$ on $T_{[e]}(\G/\H)\cong\mathfrak m$ corresponds to the adjoint representation $\Ad\colon\H\to\SO(\mm)$.
In particular, $\G$-invariant metrics on $\G/\H$ are in $1$-to-$1$ correspondence with $\Ad(\H)$-invariant inner products on $\mathfrak m$. Any such inner product $\langle\cdot,\cdot\rangle$ is determined by the $Q$-symmetric $\H$-equivariant automorphism $P\colon\mm\to\mm$ such that $\langle X,Y\rangle=Q(PX,Y)$. 
A formula for the curvature operator of the corresponding $\G$-invariant metric on $\G/\H$ 
can be found in P\"uttmann~\cite[Lemma 3.6]{puttmann},
in terms of the bilinear forms $B_\pm$ given by
\begin{equation*}
B_\pm(X,Y):=\tfrac12\big([X,PY]\mp [PX,Y]\big).
\end{equation*}
By introducing the positive-semidefinite operator $\beta\in\Sym^2(\wedge^2\mathfrak g)$, 
given by
\begin{equation*}
\langle\beta(X\wedge Y), Z\wedge W\rangle:=\tfrac14\langle [X,Y]_\mm,[Z,W]_\mm\rangle,
\end{equation*}
where $V_\mm$ denotes the component in $\mm$ of $V\in\mathfrak g$, and rearranging the above mentioned formula in terms of the Bianchi map, one obtains the following expression:
\begin{equation}\label{eq:curvop}
\begin{aligned}
\langle R(X\wedge Y),Z\wedge W\rangle&=\tfrac12\Big(Q(B_-(X,Y),[Z,W])+Q([X,Y],B_-(Z,W))\Big)\\
 &\quad+Q(B_+(X,W),P^{-1}B_+(Y,Z))\\
 &\quad-Q(B_+(X,Z),P^{-1}B_+(Y,W)) \\
&\quad-3\langle\beta(X\wedge Y),Z\wedge W\rangle+3\mathfrak b(\beta)(X,Y,Z,W).
\end{aligned}
\end{equation}
The above procedure is analogous to the rearrangement of the Gray-O'Neill formula for curvature operators of Riemannian submersions in the proof of \cite[Thm.\ 2.4]{strongpos}.\footnote{%
Compare with the \emph{normal homogeneous case} discussed in \cite[Ex.\ 2.5]{strongpos}, in which $P=\operatorname{Id}$, $B_+=0$ and $B_-(\cdot,\cdot)=[\cdot,\cdot]$. The curvature operator of $(\G/\H,Q|_\mm)$ is given by $R_{\G/\H}=R_{\G}+3\alpha-3\mathfrak b(\alpha)$, see \cite[(2.9)]{strongpos}, where $\alpha=A^*A$ is obtained from the $A$-tensor of the submersion $(\G,Q)\to (\G/\H,Q|_\mm)$. Since $R_{\G}=\alpha+\beta$, it follows that $R_{\G/\H}=4R_{\G}-3\beta+3\mathfrak b(\beta)$, which is the expression given by \eqref{eq:curvop}.
}

\begin{remark}\label{rem:symmetries}
In order to verify whether a homogeneous metric on $\G/\H$ has strongly positive (or nonnegative) curvature, it suffices to consider modified curvature operators $R+\omega$ with $4$-forms $\omega\in\Omega^4(\G/\H)$ that are $\G$-invariant. More precisely, since $R\in\Sym^2(\wedge^2\mathfrak m)$ is $\Ad(\H)$-equivariant and the set of positive-definite operators is convex, a routine averaging argument implies that there exists $\omega\in\wedge^4\mm$ such that $R+\omega$ is positive-definite if and only if there exists an $\Ad(\H)$-equivariant $\overline\omega\in\wedge^4\mm$ such that $R+\overline\omega$ is positive-definite, see \cite[Prop. 2.7]{strongpos} or \cite[Lemma 3.5]{puttmann}.
\end{remark}

\section{Wallach flag manifolds}\label{sec:flags}

In this section, we recall the basic structure of the Wallach flag manifolds and their homogeneous metrics.
Let $\KK$ be one of the real normed division algebras: $\R$ of real numbers, $\C$ of complex numbers, $\Hr$ of quaternions, or $\Ca$ of Cayley numbers (or octonions). A \emph{complete flag} $F$ in $\KK^3$ is a sequence of linear subspaces
\begin{equation*}
F=\big\{\{0\}\subset F^1\subset F^2\subset \KK^3\big\},
\end{equation*}
where $\dim_\KK F^i=i$, and the sets of complete flags in $\KK^3$ have a natural smooth structure. We denote these manifolds by $W^3$, $W^6$, $W^{12}$ and $W^{24}$,
according to the cases $\R$, $\C$, $\Hr$ and $\Ca$ respectively.

\subsection{Homogeneous sphere bundles}
The natural projection map 
\begin{equation*}
\pi\colon W^{3\dim\KK}\longrightarrow\KK P^2, \quad \pi(F)=F^1,
\end{equation*}
is a submersion, 
whose fiber $\pi^{-1}(F^1)$ over a $\KK$-line $F^1$ consists of all $\KK$-planes $F^2$ that contain $F^1$. 
Such $\KK$-planes are in one-to-one correspondence with the $\KK$-lines in the orthogonal complement $(F^1)^\perp$, hence $\pi^{-1}(F^1)$ can be identified with $\KK P^1$.
This makes $W^{3\dim\KK}$ the total space of a sphere bundle over $\KK P^2$; more precisely,
\begin{equation}\label{eq:bundles}
\begin{aligned}
&S^1\longrightarrow W^3 \longrightarrow \R P^2,& \quad &S^2\longrightarrow W^6 \longrightarrow \C P^2,\\
&S^4\longrightarrow W^{12}\longrightarrow \Hr P^2,& \quad &S^8\longrightarrow W^{24}\longrightarrow \Ca P^2.
\end{aligned}
\end{equation}
The isometry group $\G$ of each projective plane $\KK P^2$ above acts transitively on $W^{3\dim\KK}$. Furthermore, the induced subaction of the isotropy subgroup $\K$ of a $\KK$-line $F^1$ is transitive on the corresponding fiber $\pi^{-1}(F^1)$. Altogether, each sphere bundle \eqref{eq:bundles} with $W^{3\dim\KK}$ as total space is a homogeneous bundle of the form
\begin{equation}\label{eq:hombundle}
\K/\H\longrightarrow\G/\H\longrightarrow\G/\K,
\end{equation}
for a triple of compact Lie groups $\H\subset\K\subset\G$, as described in the following table:
\begin{equation}\label{eq:groups}
\arraycolsep=10pt\def\arraystretch{1.2}
\begin{array}{c|c|ccc}
\G/\H & \KK & \H & \K & \G\\ \hline
W^3 & \R & \Z_2\oplus\Z_2 & \O(2) & \SO(3)\\
W^6 & \C & \mathsf T^2 & \U(2) & \SU(3)\\
W^{12} & \Hr & \Sp(1)\Sp(1)\Sp(1) & \Sp(2)\Sp(1) & \Sp(3)\\
W^{24} & \Ca & \Spin(8) & \Spin(9) & \mathsf F_4
\end{array}
\end{equation}
The inclusions $\H\subset\K\subset\G$ for the quaternionic flag $W^{12}$ are given by the obvious matrix block embeddings, and similarly for the real and complex flags $W^3$ and $W^6$:
\begin{equation*}
\arraycolsep=3pt\def\arraystretch{1.3}
\begin{array}{rccccc}
\Z_2\oplus\Z_2\cong\mathsf S(\O(1)\O(1)\O(1))&\subset& \O(2)\cong\mathsf S(\O(2)\O(1)) &\subset &\SO(3)& \\
\mathsf T^2\cong\mathsf S(\U(1)\U(1)\U(1))\,& \subset &\U(2)\cong\mathsf S(\U(2)\U(1)) & \subset &\SU(3).&
\end{array}
\end{equation*}
Regarding the octonionic flag $W^{24}$, we describe the inclusions of $\Spin(8)$ into $\Spin(9)$ into $\mathsf F_4$ following the exceptional survey of Baez~\cite{baez}. The Lie group $\mathsf F_4$ can be realized as the automorphism group of the exceptional Jordan algebra
\begin{equation*}
\h_3(\Ca)=\big\{H\in\operatorname{Mat}_{3\times 3}(\Ca):H^*=H\big\},
\end{equation*}
of Hermitian $3\times 3$ octonionic matrices. As described in \cite[Sec. 3.4]{baez}, this algebra can also be constructed using the scalar, vector and spinor representations of $\Spin(9)$, which is hence a subgroup of $\mathsf F_4$ (see also Harvey~\cite[Thm.\ 14.99]{harvey-book}). The Lie algebra $\mathfrak f_4$ is the algebra of derivations of $\h_3(\Ca)$; in particular, $\mathfrak f_4$ contains a copy of the algebra $\mathfrak g_2$ of derivations of $\Ca$, which is a subalgebra of $\mathfrak{so}(7)\cong\mathfrak{so}(\Im(\Ca))$. More precisely, there is a splitting
\begin{equation}\label{eq:f4}
\mathfrak f_4=\mathfrak{sa}_3(\Ca)\oplus\mathfrak g_2,
\end{equation}
where $\mathfrak{sa}_3(\Ca):=\big\{A\in\operatorname{Mat}_{3\times 3}(\Ca):A^*=-A, \tr(A)=0\big\}$. For more details on the above, including formulas for the Lie bracket on $\mathfrak f_4$, see Baez~\cite[Sec. 4.2]{baez}.

\subsection{Homogeneous metrics}
\label{ssec:hommetrics}
As the above groups $\G$ are simple, by Schur's Lemma, they admit a unique bi-invariant metric $Q$, up to rescaling. We henceforth fix:
\begin{equation}\label{eq:q}
Q(X,Y)=\begin{cases}
\tfrac12\Re(\tr (XY^*)) & \text{ if } \mathsf G=\SO(3), \SU(3), \Sp(3),\\
\tfrac12\Re(\tr (X_1Y_1^*))+\tfrac18\tr (X_2Y_2^*) & \text{ if } \mathsf G=\mathsf F_4,
\end{cases}
\end{equation}
where $X_1\in\mathfrak{sa}_3(\Ca)$ and $X_2\in\mathfrak g_2\subset\mathfrak{so}(7)$ denote the components of $X\in\mathfrak f_4$, according to \eqref{eq:f4}.

In all cases, the $\Ad(\H)$-representation $\mm$ has $3$ irreducible factors
$\mm=V_1\oplus V_2\oplus V_3,$
each isomorphic to $\KK$ as a real vector space.
These irreducible factors can be parametrized by the skew-Hermitian matrices with zero diagonal entries
\begin{equation*}
\KK\oplus\KK\oplus\KK\ni (x_1,x_2,x_3)\longmapsto
\begin{pmatrix}
0 & x_3 & -\overline{x_2} \\
-\overline{x_3} & 0 & x_1\\
x_2 & -\overline{x_1} & 0
\end{pmatrix}\in V_1\oplus V_2\oplus V_3.
\end{equation*}
Consider the standard basis of $\KK$ given by
\begin{equation*}
\arraycolsep=5pt\def\arraystretch{1.3}
\begin{array}{ll}
\{1\} &\text{ for }\KK=\R, \\
\{1,i\} &\text{ for }\KK=\C, \\
\{1,i,j,k\} &\text{ for }\KK=\Hr, \\
\{1,i,j,k,l,m,n,o\} &\text{ for }\KK=\Ca.
\end{array}
\end{equation*}
From these, we construct a $Q$-orthonormal basis of $\mm=V_1\oplus V_2\oplus V_3\cong\KK\oplus\KK\oplus\KK$. To denote the elements of this basis, we use the symbols
\begin{equation*}
\1_r,\i_r,\j_r,\k_r,\l_r,\m_r,\n_r,\o_r,\qquad1\leq r\leq 3.
\end{equation*}
For example, in the case $\KK=\C$, the space $\mm=V_1\oplus V_2\oplus V_3$ is spanned by:
\begin{equation*}
\begin{array}{lll}
\1_1=\begin{pmatrix}
0 & 0 & 0 \\
0 & 0 & 1\\
0 & -1 & 0
\end{pmatrix},
& \1_2=\begin{pmatrix}
0 & 0 & -1 \\
0 & 0 & 0\\
1 & 0 & 0
\end{pmatrix},
&\1_3=\begin{pmatrix}
0 & 1 & 0 \\
-1 & 0 & 0\\
0 & 0 & 0
\end{pmatrix}, \\
\rule{0pt}{6.5ex}
\i_1=\begin{pmatrix}
0 & 0 & 0 \\
0 & 0 & i\\
0 & i & 0
\end{pmatrix},
&\i_2=\begin{pmatrix}
0 & 0 & i \\
0 & 0 & 0\\
i & 0 & 0
\end{pmatrix},
&\i_3=\begin{pmatrix}
0 & i & 0 \\
i & 0 & 0\\
0 & 0 & 0
\end{pmatrix}.
\end{array}
\end{equation*}

Since $V_r$, $1\leq r\leq3$, are irreducible and nonisomorphic, Schur's Lemma implies that $\G$-invariant metrics on $\G/\H$ are parametrized by $3$ positive numbers. We denote these by $\vec s=(s_1,s_2,s_3)$, 
so that the corresponding $\G$-invariant metric is given by
\begin{equation*}
\g_{\vec s}:=s_1^2\, Q|_{V_1}+s_2^2\, Q|_{V_2}+s_3^2\, Q|_{V_3}, \quad \text{at } T_{[e]}(\G/\H)=V_1\oplus V_2\oplus V_3.
\end{equation*}

Note that the Lie algebra of the intermediate subgroup $\K$, according to the above conventions, is given by $\mathfrak k=\mathfrak h\oplus  V_3$. As $\g_{\vec{s}}$ is $\Ad(\K)$-invariant if and only $s_1=s_2$, the homogeneous bundle \eqref{eq:hombundle} is a Riemannian submersion if and only if $s_1=s_2$. More generally, 
for each $1\leq r\leq 3$, there exists a copy of $\K$ in $\G$ whose Lie algebra is $\mathfrak{h}\oplus V_r$. Each such choice for $\K$ determines a bundle \eqref{eq:bundles}, which is a Riemannian submersion onto the corresponding projective plane (with its standard Fubini metric) if and only if $s_{r+1}=s_{r+2}$, where indices are understood modulo $3$.

Finally, for any fixed $\vec s$, observe that the image of each natural inclusion
\begin{equation}
\label{eq:totgeod}
(W^3,\g_{\vec s})\longhookrightarrow (W^6,\g_{\vec s})\longhookrightarrow (W^{12},\g_{\vec s})\longhookrightarrow (W^{24},\g_{\vec s})
\end{equation}
is the fixed point set of an isometry, and hence a \emph{totally geodesic} submanifold.

\section{Modified curvature operators}
\label{sec:curvops}

The goal of this section is to derive explicit formulas for the modified curvature operators $R+\omega$ of all homogeneous metrics $\g_{\vec s}$ on the Wallach flag manifolds, where $\omega$ is a general $\Ad(\H)$-invariant $4$-form (see Remark~\ref{rem:symmetries}).

Let us briefly outline an important computational simplification that will be used throughout this section.
Since $(R+\omega)\colon\wedge^2\mm\to\wedge^2\mm$ is $\Ad(\H)$-equivariant, Schur's Lemma implies that it admits a
block diagonal decomposition, with blocks corresponding to the isotypic components of $\wedge^2\mm$. Each isotypic component is the sum of $n$ copies of an irreducible $\H$-representation $V$ of dimension $d$.
As $R+\omega$ is symmetric, 
its restriction to this isotypic component 
is, in a suitable basis, the tensor product of the $d\times d$ identity matrix with a certain $n\times n$ symmetric matrix $A$.  
In particular, the corresponding block of $R+\omega$ is positive-definite (respectively, positive-semidefinite) if and only $A$ is positive-definite (respectively, positive-semidefinite).

In order to compute each of the symmetric matrices $A$, we choose a \emph{representative} vector $v_1\neq 0$ in one copy of $V$, and then produce additional representatives $v_2, v_3, \ldots$ in the remaining copies of $V$ by taking images of $v_1$ under appropriate isomorphisms of representations. Proceeding in this way through all isotypic components, we obtain a complete list of representatives $\{v_i\}$, such that the restriction of $R+\omega$ to the space of representatives $\operatorname{span}\{v_i\}$ is positive-definite (respectively, positive-semidefinite) if and only if $R+\omega$ is positive-definite (respectively, positive-semidefinite).
We denote by $\widehat{R}_{W^\bullet}(\vec s,\omega)$ this restriction to the space of representatives of the curvature operator $R$ of the flag manifold $(W^\bullet,\g_{\vec s})$ modified by the invariant $4$-form $\omega$.

\begin{convention}
In what follows, subindices are always taken \emph{modulo} $3$, and
\begin{equation}\label{eq:s}
s:=2(s_1s_2+s_1s_3+s_2s_3)-(s_1^2+s_2^2+s_3^2).
\end{equation}
\end{convention}

We skip this computation for the real flag manifold $W^3$, since strongly positive curvature and $\sec>0$ are equivalent in dimensions $\leq4$, see \cite[Prop.\ 2.2]{strongpos} or \cite{Thorpe72}.

\subsection{Complex flag manifold \texorpdfstring{$W^6$}{W6}}
The isotropy group $\H$ is the maximal torus $\mathsf T^2$ of $\G=\SU(3)$, see \eqref{eq:groups}. A computation with weights shows that the $\H$-representation $\wedge^2\mm$ decomposes as the sum of $3$ copies of the trivial representation and single copies of $6$ mutually nonisomorphic $2$-dimensional representations. The following $9$ representative vectors form a complete list, as described above:
\begin{align*}
&\1_r\wedge\i_r, & 1\leq r\leq 3,\\
-&\tfrac{1}{\sqrt2}\big(\1_{r+1}\wedge\1_{r+2}\big)+\tfrac{1}{\sqrt2}\big(\i_{r+1}\wedge\i_{r+2}\big), & 1\leq r\leq 3,\\
&\tfrac{1}{\sqrt2}\big(\1_{r+1}\wedge\1_{r+2}\big)+\tfrac{1}{\sqrt2}\big(\i_{r+1}\wedge\i_{r+2}\big), & 1\leq r\leq 3.
\intertext{Furthermore, the following determine a basis of the $\H$-invariant elements of $\wedge^4\mm$:}
& \xi_r:=\1_{r+1}\wedge\i_{r+1}\wedge\1_{r+2}\wedge\i_{r+2}, &1\leq r\leq 3,
\end{align*}
and we denote a general invariant $4$-form by:
\begin{equation}\label{eq:4formw6}
\xi=a_1\,\xi_1+a_2\,\xi_2+a_3\,\xi_3.
\end{equation}

The restriction $\widehat R$ of the modified curvature operator $R+\xi$ of the homogeneous manifold $(W^6,\g_{\vec s})$ to the subspace of $\wedge^2\mm$ spanned by the above representative vectors can be computed using \eqref{eq:curvop}, and results in the block diagonal matrix
\begin{equation*}
\widehat{R}_{W^6}=\diag\left(\widehat{R}_{W^6}^1, \widehat{R}_{W^6}^2, \widehat{R}_{W^6}^3\right)\!,
\end{equation*}
where the blocks, listed in the same order as the representatives, are given by:
\begin{align}
&\widehat{R}_{W^6}^1:=\begin{pmatrix}
4 s_1 & -\frac{s}{2 s_3}+a_3 & -\frac{s}{2 s_2}+a_2 \\
-\frac{s}{2 s_3}+a_3 & 4 s_2 & -\frac{s}{2 s_1}+a_1 \\
-\frac{s}{2 s_2}+a_2 & -\frac{s}{2 s_1}+a_1 & 4 s_3
\end{pmatrix}\!,\label{eq:R1w6} \\
&\widehat{R}_{W^6}^2:=\diag\big(s_{r+1}+s_{r+2}-s_r+a_r, \; 1\leq r\leq 3\big),\label{eq:R2w6} \\
&\widehat{R}_{W^6}^3:=\diag\left(\tfrac{(s_{r+1} - s_{r+2})^2 - s_r^2}{2 s_r}-a_r, \; 1\leq r\leq 3\right)\!.\label{eq:R3w6}
\end{align}

\subsection{Quaternionic flag manifold \texorpdfstring{$W^{12}$}{W12}}
The isotropy group is $\H=\Sp(1)^3$, see \eqref{eq:groups}, and the $\H$-representation $\wedge^2\mm$ decomposes into a total of $9$ isotypic components. More precisely, there are $2$ copies each of $3$ irreducible nonisomorphic representations of dimension $3$, $3$ distinct irreducible representations of dimension $12$, and $3$ distinct irreducible representations of dimension $4$. The following $12$ representative vectors form a complete list, as described above:
\begin{align*}
\tfrac{1}{\sqrt2}&\big(\1_{r+1}\wedge\i_{r+1}-\j_{r+1}\wedge\k_{r+1}\big),\; \tfrac{1}{\sqrt2}\big(\1_{r+2}\wedge\i_{r+2}+\j_{r+2}\wedge\k_{r+2}\big), &1\leq r\leq 3,\\
\tfrac{1}{2}&\big(\1_{r+1}\wedge \1_{r+2}-\i_{r+1}\wedge\i_{r+2}-\j_{r+1}\wedge\j_{r+2}-\k_{r+1}\wedge\k_{r+2}\big), &1\leq r\leq 3,\\
\tfrac{1}{\sqrt2}&\big(\1_{r+1}\wedge \1_{r+2}+\k_{r+1}\wedge \k_{r+2}\big), & 1\leq r\leq 3.
\intertext{Furthermore, the following determine a basis of the $\H$-invariant elements of $\wedge^4\mm$:}
\phi_r:= \; &(\1_{r+1}\wedge \i_{r+1}-\j_{r+1}\wedge \k_{r+1})\wedge(\1_{r+2}\wedge\i_{r+2}+\j_{r+2}\wedge\k_{r+2})&\\
\quad +&(\1_{r+1}\wedge \j_{r+1}+\i_{r+1}\wedge\k_{r+1})\wedge(\1_{r+2}\wedge\j_{r+2}-\i_{r+2}\wedge\k_{r+2})&\\
\quad +&(\1_{r+1}\wedge\k_{r+1}-\i_{r+1}\wedge\j_{r+1})\wedge(\1_{r+2}\wedge\k_{r+2}+\i_{r+2}\wedge\j_{r+2}),& 1\leq r\leq 3,\\
\psi_r :=\; &\1_r\wedge\i_r\wedge\j_r\wedge\k_r,& 1\leq r\leq 3,
\end{align*}
and we denote a general invariant $4$-form by $\phi+\psi$, where:
\begin{equation}\label{eq:4formw12}
\begin{aligned}
\phi&=a_1\,\phi_1+a_2\,\phi_2+a_3\,\phi_3,\\
\psi&=b_1\,\psi_1+b_2\,\psi_2+b_3\,\psi_3.
\end{aligned}
\end{equation}

The restriction $\widehat R$ of the modified curvature operator $R+\phi+\psi$ of the homogeneous manifold $(W^{12},\g_{\vec s})$ to the subspace of $\wedge^2\mm$ spanned by the above representative vectors can be computed using \eqref{eq:curvop}, and results in the block diagonal matrix
\begin{equation*}
\widehat{R}_{W^{12}}=\diag\left(\widehat{R}_{W^{12}}^1, \widehat{R}_{W^{12}}^2, \widehat{R}_{W^{12}}^3\right)\!,
\end{equation*}
where the blocks, listed in the same order as the representatives, are given by:
\begin{align}
&\widehat{R}_{W^{12}}^1:=
\diag\left(
\begin{pmatrix}
4s_{r+1}-b_{r+1} & -\frac{s}{s_r}+2a_r\\
-\frac{s}{s_r}+2a_r& 4s_{r+2}+b_{r+2}
\end{pmatrix}, \; 1\leq r\leq 3\right)\!,\label{eq:R1w12} \\
&\widehat{R}_{W^{12}}^2:=\diag\big(s_{r+1}+s_{r+2}-s_r+\tfrac{s}{2s_r}+3a_r, \; 1\leq r\leq 3\big),\label{eq:R2w12} \\
&\widehat{R}_{W^{12}}^3:=\diag\left(\tfrac{(s_{r+1} - s_{r+2})^2 - s_r^2}{2 s_r}-a_r, \; 1\leq r\leq 3\right)\!.\label{eq:R3w12}
\end{align}

\subsection{Octonionic flag manifold \texorpdfstring{$W^{24}$}{W24}}
The isotropy group is $\H=\Spin(8)$, see \eqref{eq:groups}, and the $\H$-representation $\wedge^2\mm$ decomposes as the sum of $3$ copies of an irreducible representation of dimension $28$, $3$ distinct irreducible representations of dimension $56$, and $3$ irreducible representations of dimension $8$. The following $9$ representative vectors form a complete list, as described above:
\begin{align*}
\tfrac{1}{\sqrt2}&\big(\j_{r}\wedge\l_{r}+\k_{r}\wedge\m_{r}\big), &1\leq r\leq 3,\\
\tfrac{1}{2\sqrt2}&\big(\1_{r+1}\wedge \1_{r+2}-\i_{r+1}\wedge\i_{r+2}-\j_{r+1}\wedge\j_{r+2}-\k_{r+1}\wedge\k_{r+2}&\\
&\phantom{\big(}-\l_{r+1}\wedge \l_{r+2}-\m_{r+1}\wedge\m_{r+2}-\n_{r+1}\wedge\n_{r+2}-\o_{r+1}\wedge\o_{r+2}\big), &1\leq r\leq 3,\\
\tfrac{1}{\sqrt2}&\big(\1_{r+1}\wedge \1_{r+2}+\o_{r+1}\wedge \o_{r+2}\big), & 1\leq r\leq 3.
\end{align*}
Furthermore, the following determine a basis of the $\H$-invariant elements of $\wedge^4\mm$:
\begin{align*}
\zeta_r \, :=\, &(\1_{r+1}\wedge \i_{r+1}-\j_{r+1}\wedge \k_{r+1})\wedge(\1_{r+2}\wedge\i_{r+2}+\j_{r+2}\wedge\k_{r+2})\\
+&(\1_{r+1}\wedge \j_{r+1}+\i_{r+1}\wedge\k_{r+1})\wedge(\1_{r+2}\wedge\j_{r+2}-\i_{r+2}\wedge\k_{r+2})\\
+&(\1_{r+1}\wedge\k_{r+1}-\i_{r+1}\wedge\j_{r+1})\wedge(\1_{r+2}\wedge\k_{r+2}+\i_{r+2}\wedge\j_{r+2})\\
+&(\1_{r+1}\wedge\l_{r+1}+\i_{r+1}\wedge\m_{r+1})\wedge(\1_{r+2}\wedge\l_{r+2}-\i_{r+2}\wedge\m_{r+2})\\
+&(\1_{r+1}\wedge\m_{r+1}-\i_{r+1}\wedge\l_{r+1})\wedge(\1_{r+2}\wedge\m_{r+2}+\i_{r+2}\wedge\l_{r+2})\\
+&(\1_{r+1}\wedge\n_{r+1}-\i_{r+1}\wedge\o_{r+1})\wedge(\1_{r+2}\wedge\n_{r+2}+\i_{r+2}\wedge\o_{r+2})\\
+&(\1_{r+1}\wedge\o_{r+1}+\i_{r+1}\wedge\n_{r+1})\wedge(\1_{r+2}\wedge\o_{r+2}-\i_{r+2}\wedge\n_{r+2})\\
\rule{0pt}{2.7ex}
+&(\1_{r+1}\wedge \i_{r+1}+\j_{r+1}\wedge \k_{r+1})\wedge(\l_{r+2}\wedge\m_{r+2}-\n_{r+2}\wedge\o_{r+2})\\
+&(\1_{r+1}\wedge \j_{r+1}-\i_{r+1}\wedge\k_{r+1})\wedge(\l_{r+2}\wedge\n_{r+2}+\m_{r+2}\wedge\o_{r+2})\\
+&(\1_{r+1}\wedge\k_{r+1}+\i_{r+1}\wedge\j_{r+1})\wedge(\l_{r+2}\wedge\o_{r+2}-\m_{r+2}\wedge\n_{r+2})\\
-&(\1_{r+1}\wedge\l_{r+1}-\i_{r+1}\wedge\m_{r+1})\wedge(\j_{r+2}\wedge\n_{r+2}+\k_{r+2}\wedge\o_{r+2})\\
-&(\1_{r+1}\wedge\m_{r+1}+\i_{r+1}\wedge\l_{r+1})\wedge(\j_{r+2}\wedge\o_{r+2}-\k_{r+2}\wedge\n_{r+2})\\
+&(\1_{r+1}\wedge\n_{r+1}+\i_{r+1}\wedge\o_{r+1})\wedge(\j_{r+2}\wedge\l_{r+2}-\k_{r+2}\wedge\m_{r+2})\\
+&(\1_{r+1}\wedge\o_{r+1}-\i_{r+1}\wedge\n_{r+1})\wedge(\j_{r+2}\wedge\m_{r+2}+\k_{r+2}\wedge\l_{r+2})\\
\rule{0pt}{2.7ex}
-&(\j_{r+1}\wedge \l_{r+1}-\k_{r+1}\wedge \m_{r+1})\wedge(\1_{r+2}\wedge\n_{r+2}-\i_{r+2}\wedge\o_{r+2})\\
-&(\j_{r+1}\wedge \m_{r+1}+\k_{r+1}\wedge\l_{r+1})\wedge(\1_{r+2}\wedge\o_{r+2}+\i_{r+2}\wedge\n_{r+2})\\
+&(\j_{r+1}\wedge\n_{r+1}+\k_{r+1}\wedge\o_{r+1})\wedge(\1_{r+2}\wedge\l_{r+2}+\i_{r+2}\wedge\m_{r+2})\\
+&(\j_{r+1}\wedge\o_{r+1}-\k_{r+1}\wedge\n_{r+1})\wedge(\1_{r+2}\wedge\m_{r+2}-\i_{r+2}\wedge\l_{r+2})\\
-&(\l_{r+1}\wedge\m_{r+1}-\n_{r+1}\wedge\o_{r+1})\wedge(\1_{r+2}\wedge\i_{r+2}-\j_{r+2}\wedge\k_{r+2})\\
-&(\l_{r+1}\wedge\n_{r+1}+\m_{r+1}\wedge\o_{r+1})\wedge(\1_{r+2}\wedge\j_{r+2}+\i_{r+2}\wedge\k_{r+2})\\
-&(\l_{r+1}\wedge\o_{r+1}-\m_{r+1}\wedge\n_{r+1})\wedge(\1_{r+2}\wedge\k_{r+2}-\i_{r+2}\wedge\j_{r+2})\\
\rule{0pt}{2.7ex}
-&(\j_{r+1}\wedge \l_{r+1}+\k_{r+1}\wedge \m_{r+1})\wedge(\j_{r+2}\wedge\l_{r+2}+\k_{r+2}\wedge\m_{r+2})\\
-&(\j_{r+1}\wedge \m_{r+1}-\k_{r+1}\wedge\l_{r+1})\wedge(\j_{r+2}\wedge\m_{r+2}-\k_{r+2}\wedge\l_{r+2})\\
-&(\j_{r+1}\wedge\n_{r+1}-\k_{r+1}\wedge\o_{r+1})\wedge(\j_{r+2}\wedge\n_{r+2}-\k_{r+2}\wedge\o_{r+2})\\
-&(\j_{r+1}\wedge\o_{r+1}+\k_{r+1}\wedge\n_{r+1})\wedge(\j_{r+2}\wedge\o_{r+2}+\k_{r+2}\wedge\n_{r+2})\\
-&(\l_{r+1}\wedge\m_{r+1}+\n_{r+1}\wedge\o_{r+1})\wedge(\l_{r+2}\wedge\m_{r+2}+\n_{r+2}\wedge\o_{r+2})\\
-&(\l_{r+1}\wedge\n_{r+1}-\m_{r+1}\wedge\o_{r+1})\wedge(\l_{r+2}\wedge\n_{r+2}-\m_{r+2}\wedge\o_{r+2})\\
-&(\l_{r+1}\wedge\o_{r+1}+\m_{r+1}\wedge\n_{r+1})\wedge(\l_{r+2}\wedge\o_{r+2}+\m_{r+2}\wedge\n_{r+2}),\; 1\leq r\leq 3,
\end{align*}
and we denote a general invariant $4$-form by:
\begin{equation}\label{eq:4formw24}
\zeta=a_1\,\zeta_1+a_2\,\zeta_2+a_3\,\zeta_3.
\end{equation}

The restriction $\widehat R$ of the modified curvature operator $R+\zeta$ of the homogeneous manifold $(W^{24},\g_{\vec s})$ to the subspace of $\wedge^2\mm$ spanned by the above representative vectors can be computed using \eqref{eq:curvop}, and results in the block diagonal matrix
\begin{equation*}
\widehat{R}_{W^{24}}=\diag\left(\widehat{R}_{W^{24}}^1, \widehat{R}_{W^{24}}^2, \widehat{R}_{W^{24}}^3\right)\!,
\end{equation*}
where the blocks, listed in the same order as the representatives, are given by:
\begin{align}
&\widehat{R}_{W^{24}}^1:=\begin{pmatrix}
4 s_1 & \frac{s}{s_3}-2a_3 & \frac{s}{s_2}-2a_2 \\
\frac{s}{s_3}-2a_3 & 4 s_2 & \frac{s}{s_1}-2a_1 \\
\frac{s}{s_2}-2a_2 & \frac{s}{s_1}-2a_1 & 4 s_3
\end{pmatrix}\!,\label{eq:R1w24} \\
&\widehat{R}_{W^{24}}^2:=\diag\big(s_{r+1}+s_{r+2}-s_r+\tfrac{3s}{2s_r}+7a_r, \; 1\leq r\leq 3\big), \label{eq:R2w24} \\
&\widehat{R}_{W^{24}}^3:=\diag\left(\tfrac{(s_{r+1} - s_{r+2})^2 - s_r^2}{2 s_r}-a_r, \; 1\leq r\leq 3\right)\!.\label{eq:R3w24}
\end{align}

\begin{remark}
Since there are totally geodesic isometric embeddings \eqref{eq:totgeod} between the Wallach flag manifolds, their modified curvature operators are restrictions of one another. Nevertheless, the above matrices $\widehat R_{W^\bullet}$ are \emph{not} submatrices of one another, as the lists of representative vectors are \emph{not} sublists of one another. In fact, such a choice is not possible due to how the corresponding representations decompose into irreducibles.
We also remark that some invariant $4$-forms of each Wallach flag manifold can be seen as projections of invariant $4$-forms of a larger Wallach flag manifold; namely
 $\zeta_r$ projected to $\wedge^4$ of the subspace tangent to $W^{12}$ equals $\phi_r$, and $\phi_r$ projected to $\wedge^4$ of the subspace tangent to $W^6$ equals $\xi_r$.
\end{remark}

\section{Moduli spaces}
\label{sec:moduli}

In this section, we determine the moduli spaces of homogeneous metrics with strongly nonnegative and strongly positive curvature on the Wallach flag manifolds. 
This is based on a detailed analysis of the matrices $R_{W^\bullet}(\vec s,\omega)$ that encode the action of modified curvature operators on a complete list of representatives, see Section~\ref{sec:curvops}.

We begin by recalling the moduli spaces of homogeneous metrics with $\sec\geq0$ and $\sec>0$, in Proposition~\ref{prop:modulisec}.
We then assign to each $\vec s$ a $4$-form $\omega_0(\vec s)$ with the remarkable property that $R+\omega_0$ is positive-semidefinite if and only if $\g_{\vec s}$ has strongly nonnegative curvature, see \eqref{eq:ar}. This is the key step in determining the moduli spaces of homogeneous metrics with strongly nonnegative curvature (Proposition~\ref{prop:strnneg}), which proves Theorem~\ref{strongnonneg}.
Finally, we combine these results with a first-order perturbation argument to determine the moduli spaces of homogeneous metrics with strongly positive curvature (Proposition~\ref{prop:strpos}), which proves Theorem~\ref{mainthm}.

\begin{convention}
In what follows, we make frequent use of the polynomials
\begin{equation}\label{eq:pr}
\prs:=(s_{r+1}-s_{r+2})^2+2s_r(s_{r+1}+s_{r+2})-3s_r^2.
\end{equation}
\end{convention}

Note that $s=\mathpzc{p}_1(\vec s\hspace{1pt} )+\mathpzc{p}_2(\vec s\hspace{1pt} )+\mathpzc{p}_3(\vec s\hspace{1pt} )$, see \eqref{eq:s}.

\subsection{Sectional curvature}
\label{ssec:sectional}
An important preliminary result to discuss $\sec\geq0$ and $\sec>0$ among homogeneous metrics on the Wallach flag manifolds is to recall the moduli space of left-invariant metrics with these properties on $\SU(2)$. Note that this corresponds to the case of the real flag manifold, as 
$(W^3,\g_{\vec s})$ is locally isometric to $\SU(2)$ equipped with the left-invariant metric for which the canonical orthogonal Killing fields have lengths $s_1$, $s_2$, and $s_3$, see \eqref{eq:groups}.
This metric is known to have $\sec\geq 0$ (respectively, $\sec>0$) if and only if $\prs\geq 0$ (respectively $\prs>0$) for $r=1,2,3$, see \cite[Thm.\ B]{vz}, and Figure~\ref{fig:modulisec} for a graphical representation.

\begin{remark}
The curvature conditions on $\g_{\vec s}$ that we consider are scale-invariant on $\vec s$, and permuting the elements of $\vec s=(s_1,s_2,s_3)$ results in isometric homogeneous metrics. 
In particular, the \emph{moduli space} of homogeneous metrics $\g_{\vec s}$ satisfying a given curvature condition is to be understood up to these ambiguities, which can be removed by imposing gauge fixing conditions, for instance, $s_1\leq s_2\leq s_3$ and $s_1+s_2+s_3=1$.
The latter normalization condition is used in Figure~\ref{fig:modulisec}, to facilitate the graphical representation. Note that the permutation invariance is evident in Figure~\ref{fig:modulisec}, where there are $6$ isometric copies of the moduli space $\{\g_{\vec s}:\sec_{\g_{\vec s}}\geq0\}$.
\end{remark}

\begin{figure}[htf]
\vspace{-0.25cm}
\centering
\includegraphics[scale=0.7]{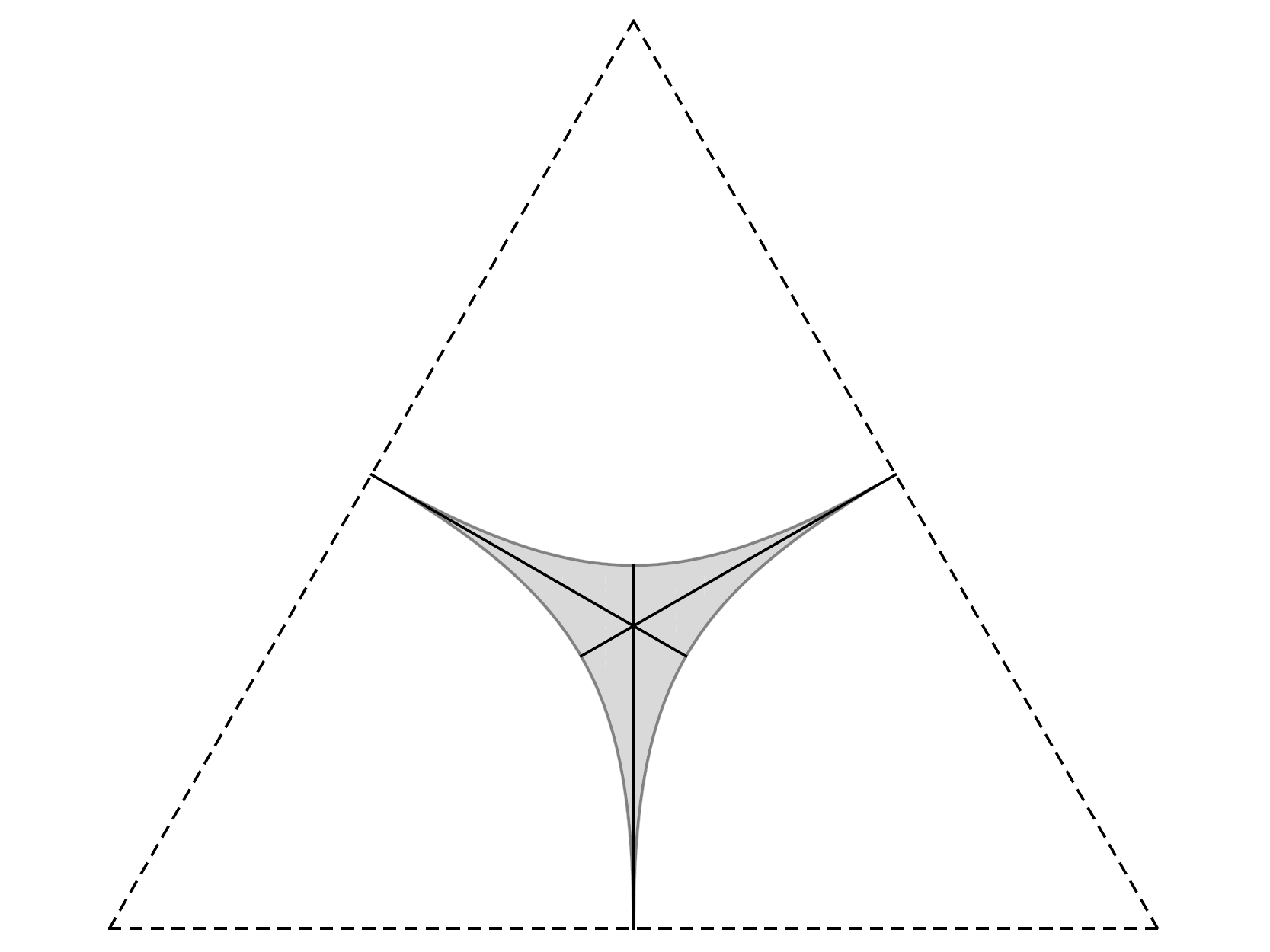}
\caption{
The intersection of $\{\vec s\in\R^3:\prs\geq0\}$ with the plane $s_1+s_2+s_3=1$.
Dotted segments indicate where $s_r=0$, and continuous segments indicate where two of the $s_r$ are equal.
}
\label{fig:modulisec}
\end{figure}

\begin{proposition}\label{prop:modulisec}
The homogeneous metric $\g_{\vec s}$ on $W^6$, $W^{12}$, or $W^{24}$, has $\sec\geq0$ if and only if $\prs\geq0$, $r=1,2,3$. Furthermore, $\g_{\vec s}$ has $\sec>0$ if and only if $\prs>0$, $r=1,2,3$, and $s_r$ are not all equal.
\end{proposition}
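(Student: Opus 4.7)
The plan is to combine the totally geodesic inclusions $(W^3,\g_{\vec s})\hookrightarrow(W^\bullet,\g_{\vec s})$ from \eqref{eq:totgeod} with the matrix realizations of modified curvature operators from Section~\ref{sec:curvops}. The necessity direction will come from restricting to the $W^3$ slice, while the sufficiency will be obtained by exhibiting an explicit invariant $4$-form $\omega_0$ which witnesses that $R+\omega_0$ is positive-semidefinite under the stated inequalities; by Thorpe's trick this forces $\sec\geq 0$.

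For \emph{necessity}, since the inclusion $W^3\hookrightarrow W^\bullet$ is totally geodesic, $\sec_{\g_{\vec s}}\geq 0$ (resp.\ $>0$) on $W^\bullet$ implies the same on $W^3$. Since $(W^3,\g_{\vec s})$ is locally isometric to $\SU(2)$ with left-invariant metric of weights $(s_1,s_2,s_3)$, the result \cite[Thm.\ B]{vz} recalled in Subsection~\ref{ssec:sectional} gives the necessary inequalities $\mathpzc{p}_r(\vec s)\geq 0$ (resp.\ $>0$) for $r=1,2,3$. The extra exclusion of $s_1=s_2=s_3$ in the strict case is obtained by exhibiting a flat $2$-plane in the normal homogeneous metric $Q|_\mm$ on each of $W^6$, $W^{12}$, $W^{24}$. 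This can be done by producing a $2$-dimensional abelian subalgebra of $\mathfrak g$ contained in $\mm$ (for instance, the image of a Cartan subalgebra $\mathfrak t^2\subset\mathfrak{su}(3)$ under conjugation by a unitary matrix with entries of equal modulus, and analogous constructions for the higher cases), and verifying via \eqref{eq:curvop} that the corresponding plane has $\sec=0$.

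For \emph{sufficiency}, assume $\mathpzc{p}_r(\vec s)\geq 0$ for all $r$. I would make the specific choice
\[
a_r := \frac{(s_{r+1}-s_{r+2})^2 - s_r^2}{2s_r}
\]
in the invariant $4$-forms of Section~\ref{sec:curvops}, which precisely annihilates the block $\widehat R^3_{W^\bullet}$ in each case, and complement this with an appropriate choice of $b_r$ in the case of $W^{12}$. Direct algebraic expansion then shows that the remaining blocks $\widehat R^1_{W^\bullet}$ and $\widehat R^2_{W^\bullet}$ become positive-semidefinite exactly when $\mathpzc{p}_r(\vec s)\geq 0$ for all $r$. Because modified curvature operators produce the same sectional curvature function as $R$, positive-semidefiniteness of $R+\omega_0$ yields $\sec\geq 0$. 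For the strict case, the same $\omega_0$ gives $R+\omega_0\geq 0$ with a kernel detectable from the vanishing of the relevant principal minors; the first-order perturbation $\omega_0\rightsquigarrow\omega_0+\varepsilon\omega'$ outlined in the introduction, with $\omega'$ chosen positive on this kernel, then produces $R+\omega>0$ and hence $\sec>0$, provided $\mathpzc{p}_r>0$ and not all $s_r$ coincide.

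The \emph{main obstacle} is the computational verification in the sufficiency direction: after the above substitution for $a_r$, the blocks $\widehat R^1_{W^\bullet}$ (a $3\times 3$ matrix for $W^6$ and $W^{24}$, a direct sum of $2\times 2$ matrices for $W^{12}$) and $\widehat R^2_{W^\bullet}$ need to be shown to be positive-semidefinite precisely under $\mathpzc{p}_r\geq 0$. This requires factoring the relevant leading principal minors in terms of the $\mathpzc{p}_r$ and the positive $s_r$, and identifying the correct pattern (especially the choice of $b_r$ for $W^{12}$) that synchronizes all three flag manifolds.
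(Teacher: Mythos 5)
The paper's proof of Proposition~\ref{prop:modulisec} is just a citation of Valiev~\cite{valiev} and P\"uttmann~\cite{puttmann2}, so your attempt is a genuinely different (and far more ambitious) route: you want to derive it from scratch via Thorpe's trick and the representative matrices of Section~\ref{sec:curvops}. Your necessity direction is sound: the totally geodesic slice $W^3\hookrightarrow W^\bullet$ together with \cite[Thm.~B]{vz} gives $\prs\geq 0$ (resp.\ $>0$), and the exclusion of $s_1=s_2=s_3$ in the strict case can indeed be handled by exhibiting a $2$-dimensional abelian subalgebra of $\mathfrak g$ inside $\mathfrak m$ (e.g.\ for $\SU(3)/\mathsf T^2$, the circulant traceless skew-Hermitian matrices, which form a Cartan subalgebra of $\mathfrak{su}(3)$ lying entirely inside $\mathfrak m$). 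Your sufficiency argument for $\sec\geq0$ also works, and in fact it is precisely the implication $(3)\Rightarrow(4)\Rightarrow(1)\Rightarrow(2)$ that the paper carries out in Proposition~\ref{prop:strnneg}.

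However, there is a \emph{genuine gap} in your sufficiency argument for the strict inequality $\sec>0$. You propose to show that $\prs>0$ together with ``not all $s_r$ equal'' implies $\sec>0$ by perturbing $\omega_0$ to produce some $\omega$ with $R+\omega$ positive-definite. This cannot succeed for $W^{24}$: by Theorem~\ref{mainthm} and Proposition~\ref{prop:strpos}, when exactly two of the $s_r$ coincide (say $s_1=s_2\ne s_3$, with $\prs>0$), the metric $\g_{\vec s}$ on $W^{24}$ has $\sec>0$ but \emph{no} $4$-form $\omega$ makes $R+\omega$ positive-definite, since $(W^{24},\g_{\vec s})$ then Riemannian-submerges onto $\Ca P^2$, which has no strongly positively curved homogeneous metric~\cite[Thm.~A, C]{strongpos}. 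Thorpe's trick is therefore strictly too weak to establish $\sec>0$ in precisely the edge cases where your argument needs it. To close the gap, for $W^{24}$ with two equal $s_r$ you would need a direct analysis of the sectional curvature (as in Valiev's original proof), or some other device not based on positive-definiteness of $R+\omega$. A secondary, more minor point: you should also confirm that the ``appropriate choice of $b_r$'' for $W^{12}$ is simply $b_r=0$, which is what makes the blocks factor through $\prs$ cleanly.
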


\begin{proof}
See Valiev~\cite[Thm.\ 2]{valiev}, and also P\"uttmann~\cite[Thm.\ 3.1]{puttmann2}.
\end{proof}

\begin{remark}
The central point in Figure~\ref{fig:modulisec}, where $s_1=s_2=s_3$, corresponds to a multiple of the bi-invariant metric $Q$ on $\G$, see \eqref{eq:q}. Thus, the induced metric on $W^\bullet$ is a \emph{normal} homogeneous metric, which is well-known to have $\sec\geq0$ but not $\sec>0$. Note also that the moduli space of homogeneous metrics with $\sec\geq0$ on $W^\bullet$ is star-shaped around the normal homogeneous metric, cf.\ \cite[Prop.\ 1.1]{schwa-tapp}.
\end{remark}

\begin{remark}
The fact that the above moduli spaces for $W^6$, $W^{12}$ and $W^{24}$ coincide can be explained using a generalization of the embeddings \eqref{eq:totgeod}, by showing that planes with extremal sectional curvature can be moved via isometries to become tangent to certain totally geodesic submanifolds, see \cite{wilking-notes}.
\end{remark}

\subsection{Strongly nonnegative curvature}
\label{ssec:strnneg}
Define the invariant $4$-form $\omega_0=\omega_0(\vec s)$ on each Wallach flag manifold $W^6$, $W^{12}$ and $W^{24}$ by setting:
\begin{equation}\label{eq:ar}
a_r:=\frac{(s_{r+1} - s_{r+2})^2 - s_r^2}{2 s_r}, \qquad 1\leq r\leq 3,
\end{equation}
on \eqref{eq:4formw6}, \eqref{eq:4formw12} and \eqref{eq:4formw24}, and $b_r:=0$ for $W^{12}$. Note that these values of $a_r$ are precisely those for which the third blocks $R_{W^\bullet}^3(\vec s,\omega_0)$ vanish, see \eqref{eq:R3w6}, \eqref{eq:R3w12} and \eqref{eq:R3w24} and Remark~\ref{rem:secondblocks}. In particular, $\omega_0$ depends linearly on the curvature operator of $\g_{\vec s}$.
We now demonstrate that $\omega_0$ is the ``best'' $4$-form for the purpose of verifying whether $\g_{\vec s}$ has strongly nonnegative curvature; in particular, proving Theorem~\ref{strongnonneg}.

\begin{proposition}\label{prop:strnneg}
Let $\g_{\vec s}$ be a homogeneous metric on $W^6$, $W^{12}$, or $W^{24}$, with curvature operator $R$, and let $\omega_0$ be defined as above. The following are equivalent:
\begin{itemize}
\item[(1)] $\g_{\vec s}$ has strongly nonnegative curvature, 
\item[(2)] $\g_{\vec s}$ has $\sec\geq0$,
\item[(3)] $\prs\geq 0$, for $r=1,2,3$, 
\item[(4)] $R+\omega_0$ is a positive-semidefinite operator. 
\end{itemize}
\end{proposition}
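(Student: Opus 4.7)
The implications $(1)\Rightarrow (2)$ and $(4)\Rightarrow(1)$ follow immediately from the definition of strongly nonnegative curvature (the latter because $\omega_0$ is an $\Ad(\H)$-invariant $4$-form, cf.\ Remark~\ref{rem:symmetries}), and $(2)\Leftrightarrow(3)$ is Proposition~\ref{prop:modulisec}. The substantive task is therefore $(3)\Leftrightarrow(4)$, which I would handle by analyzing, block by block, the decomposition $\widehat R_{W^\bullet}(\vec s,\omega_0)=\diag(\widehat R^1,\widehat R^2,\widehat R^3)$ from Section~\ref{sec:curvops}.

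By the very choice of $a_r$ in \eqref{eq:ar}, the third block $\widehat R^3$ vanishes identically (see \eqref{eq:R3w6}, \eqref{eq:R3w12}, \eqref{eq:R3w24}). Substituting the same $a_r$ (and $b_r=0$ for $W^{12}$) into \eqref{eq:R2w6}, \eqref{eq:R2w12}, \eqref{eq:R2w24} and simplifying yields
\[
\widehat R^2_{W^\bullet}=\diag\bigl(c_\bullet\,\prs/s_r\,:\,1\le r\le 3\bigr),\qquad c_\bullet\in\{\tfrac12,\,1,\,2\}.
\]
Hence $\widehat R^2$ is positive-semidefinite precisely when $\prs\ge 0$ for every $r$; this both proves $(4)\Rightarrow(3)$ and reduces $(3)\Rightarrow(4)$ to showing that $\widehat R^1$ is positive-semidefinite whenever all $\prs\ge 0$.

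For the first block I would first rewrite its off-diagonal entries in the compact form $q_{ij}:=(s_i-s_j)^2-s_k(s_i+s_j)=s_k(s_i+s_j-s_k)-s$, with $\{i,j,k\}=\{1,2,3\}$, and conjugate $\widehat R^1$ by $\diag\bigl(\sqrt{s_2s_3/s_1},\sqrt{s_1s_3/s_2},\sqrt{s_1s_2/s_3}\bigr)$ (or its analogue in the block-diagonal $W^{12}$-case). The crucial algebraic identity is
\[
4\,s_r^2\,s_{r+1}s_{r+2}-q_{r+1,r+2}^2=(s_{r+1}-s_{r+2})^2\cdot s,
\]
readily verified by expansion using $s=4\sigma_2-\sigma_1^2$ in the elementary symmetric polynomials of the $s_r$. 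It shows that every $2\times 2$ principal minor of $\widehat R^1$ is nonnegative as soon as $s=\mathpzc{p}_1(\vec s)+\mathpzc{p}_2(\vec s)+\mathpzc{p}_3(\vec s)\ge 0$, which settles $W^{12}$ completely and reduces $W^6$ and $W^{24}$ to the $3\times 3$ determinant. A second symmetric-function identity,
\[
(s_2+s_3-s_1)(s_1+s_3-s_2)(s_1+s_2-s_3)+8\,s_1s_2s_3=s\,(s_1+s_2+s_3),
\]
translates via $q_{ij}=s_k(s_i+s_j-s_k)-s$ into $q_{12}q_{13}q_{23}+8\,(s_1s_2s_3)^2=s\,\Sigma$, with $\Sigma:=\sum_r s_{r+1}s_{r+2}(s_{r+1}-s_{r+2})^2\ge 0$. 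Combining the two identities, a short calculation gives $\det\widehat R^1_{W^{24}}\equiv 0$ and $\det\widehat R^1_{W^6}=6\,s\,\Sigma$, both manifestly nonnegative when $s\ge 0$.

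I expect the main obstacle to be the clean isolation of these two polynomial identities, after which the argument becomes uniform across the three Wallach flags: under hypothesis (3) one has $s=\sum_r\prs\ge 0$, so all three blocks of $\widehat R$ are positive-semidefinite and (4) follows. The vanishing $\det\widehat R^1_{W^{24}}\equiv 0$ is striking in its own right: it says that $\widehat R^1$ always has a nontrivial kernel in the octonionic case, foreshadowing the failure of strongly positive curvature on the $\Ca P^2$-submersion locus announced in Theorem~\ref{mainthm}.
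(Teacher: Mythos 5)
Your proposal is correct, and the overall strategy — reduce to the block decomposition $\widehat R=\diag(\widehat R^1,\widehat R^2,\widehat R^3)$ from Section~\ref{sec:curvops}, observe that $\omega_0$ kills $\widehat R^3$ and turns $\widehat R^2$ into $\diag(c_\bullet\,\prs/s_r)$, then apply Sylvester's criterion to $\widehat R^1$ — is the same as the paper's. Where you genuinely diverge is in how you finish off $\widehat R^1$. The paper first uses the totally geodesic embeddings \eqref{eq:totgeod} to reduce everything to $W^{24}$ (so it never touches the minors of $\widehat R^1_{W^6}$ or $\widehat R^1_{W^{12}}$); then, for $W^{24}$, it treats the isotropic case $s_1=s_2=s_3$ by direct inspection and, otherwise, exhibits an explicit kernel vector $v=\bigl((s_2-s_3)/s_1,(s_3-s_1)/s_2,(s_1-s_2)/s_3\bigr)^{\mathtt t}$ to get $\det\widehat R^1_{W^{24}}=0$. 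You instead handle all three manifolds directly, relying on two symmetric-function identities: $4s_r^2s_{r+1}s_{r+2}-q_{r+1,r+2}^2=(s_{r+1}-s_{r+2})^2\,s$ for the $2\times2$ minors, and $q_{12}q_{13}q_{23}+8\sigma_3^2=s\,\Sigma$ for the $3\times3$ determinant. The first identity is equivalent to the paper's computation of the $2\times2$ minors; the second is a genuine replacement for the kernel-vector argument. Your route buys a uniform treatment (no case split on whether the $s_r$ are equal, and the identity $\det\widehat R^1_{W^{24}}\equiv 0$ is a cleaner statement than ``$v\in\ker$''), at the modest cost of one extra polynomial identity and of proving the $W^6$ and $W^{12}$ cases when the embedding trick would have given them for free. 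Both arguments are correct, and the logic of the four equivalences is handled the same way. One small point worth making explicit: for $W^6$ the $2\times 2$ principal minors of the conjugated $\widehat R^1$ are $12\,s_{r+1}s_{r+2}s_r^2 + (s_{r+1}-s_{r+2})^2 s$, not directly the right-hand side of your first identity, because there the off-diagonal coefficient is $q_{ij}$ rather than $\pm 2q_{ij}$; the conclusion (nonnegativity when $s\ge 0$) still holds, but it is a two-line addition to the argument, not an immediate consequence.
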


\begin{proof}
The implications (1) $\Rightarrow$ (2) and (4) $\Rightarrow$ (1) are trivial (see Section~\ref{sec:strongpos}), and the equivalence (2) $\Leftrightarrow$ (3) follows from Proposition~\ref{prop:modulisec}. We conclude by proving the crucial implication (3) $\Rightarrow$ (4).

By the totally geodesic embeddings \eqref{eq:totgeod}, it suffices to prove (3) $\Rightarrow$ (4) in the case of $(W^{24},\g_{\vec s})$, as modified curvature operators of $(W^6,\g_{\vec s})$ and $(W^{12},\g_{\vec s})$ are restrictions of modified curvature operators of $(W^{24},\g_{\vec s})$.
It is a direct verification that $\widehat{R}^3_{W^{24}}(\vec s, \omega_0)=0$, cf.\ \eqref{eq:R3w24} and \eqref{eq:ar}.
Furthermore, a simple calculation shows
\begin{equation}\label{eq:secnonnegR2w24}
\widehat{R}^2_{W^{24}}(\vec s, \omega_0)=\diag\left(\tfrac{2}{s_r}\prs,\; 1\leq r\leq3 \right)\!,
\end{equation}
which is hence positive-semidefinite by condition (3), see \eqref{eq:R2w24} and \eqref{eq:pr}. 
If all $s_r$ are equal, direct inspection shows that the first block $\widehat{R}^1_{W^{24}}(\vec s, \omega_0)$ is positive-semidefinite. Otherwise, it follows from \eqref{eq:R1w24} and \eqref{eq:ar} that the $2\times 2$ principal minors of $\widehat{R}^1_{W^{24}}(\vec s, \omega_0)$ are:
\begin{equation*}
4s(s_{r+1}-s_{r+2})^2, \quad 1\leq r\leq 3,
\end{equation*}
hence nonnegative. Moreover, $\widehat{R}^1_{W^{24}}(\vec s, \omega_0)v=0$ where
\begin{equation}\label{eq:specialv}
v:=\left(\frac{s_2-s_3}{s_1},\; \frac{s_3-s_1}{s_2},\; \frac{s_1-s_2}{s_3}\right)^\mathtt t\!.
\end{equation}
Since its diagonal entries are positive, Sylvester's criterion implies that $\widehat{R}^1_{W^{24}}(\vec s, \omega_0)$ is positive-semidefinite. Therefore, also $R+\omega_0$ is positive-semidefinite, that is, (4) holds.
\end{proof}

\begin{remark}\label{rem:secondblocks}
Note that the positive-semidefiniteness of each one of the second blocks $\widehat{R}^2_{W^{6}}(\vec s, \omega_0)$, $\widehat{R}^2_{W^{12}}(\vec s, \omega_0)$ and $\widehat{R}^2_{W^{24}}(\vec s, \omega_0)$ is equivalent to the conditions $\prs\geq0$ for the homogeneous metric $\g_{\vec s}$ to have $\sec\geq0$. This can be easily verified through computations analogous to \eqref{eq:secnonnegR2w24}. In fact, the definition \eqref{eq:ar} of $\omega_0$ was originally motivated by the observation that positive-semidefiniteness of the second and third blocks $\widehat{R}^2_{W^\bullet}(\vec s, \omega)$ and $\widehat{R}^3_{W^\bullet}(\vec s, \omega)$ is equivalent to $a_r\in I_{r,\vec s}$, for certain intervals $I_{r,\vec s}$ that are nonempty if and only if $\prs\geq0$. The choice \eqref{eq:ar} corresponds to setting $a_r$ equal to the left endpoint of $I_{r,\vec s}$, which conveniently implies that $\widehat{R}^3_{W^\bullet}(\vec s, \omega_0)=0$.
Finally, note that this observation yields a proof that (1) $\Rightarrow$ (3) which is independent of Proposition~\ref{prop:modulisec}.
\end{remark}


\subsection{Strongly positive curvature}
\label{ssec:strpos}
We now determine the moduli spaces of homogeneous metrics with strongly positive curvature on the Wallach flag manifolds; in particular, proving Theorem~\ref{mainthm}.
This is done studying first-order perturbations of the positive-semidefinite matrices $\widehat{R}_{W^\bullet}(\vec s, \omega_0)$ in Proposition~\ref{prop:strnneg}; as finding a $4$-form $\omega'$ whose restriction to $\ker\widehat{R}_{W^\bullet}(\vec s, \omega_0)$ is positive-definite implies that $\widehat{R}_{W^\bullet}(\vec s, \omega_0+\varepsilon\,\omega')$ is positive-definite for small $\varepsilon>0$, see \cite[Lemma 4.1]{strongpos}.


\begin{proposition}\label{prop:strpos}
The homogeneous metric $\g_{\vec s}$ has strongly positive curvature if and only if $\prs>0$, $r=1,2,3$ and
\begin{itemize}
\item[(1)] $s_r$ are \emph{not all equal}, in the case of $W^6$ and $W^{12}$;
\item[(2)] $s_r$ are \emph{pairwise distinct}, in the case of $W^{24}$.
\end{itemize}
\end{proposition}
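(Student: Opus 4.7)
The plan is to prove the two implications separately.

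For necessity, strongly positive curvature implies $\sec>0$, giving by Proposition~\ref{prop:modulisec} that $\prs>0$ for $r=1,2,3$ and the $s_r$ are not all equal—this already handles (1) and half of (2). For the remaining condition in (2), suppose for contradiction that $s_{r+1}=s_{r+2}$ for some $r$ on $W^{24}$; then Subsection~\ref{ssec:hommetrics} says $\g_{\vec s}$ is the total space of a Riemannian submersion onto $\Ca P^2$. Since strongly positive curvature descends under Riemannian submersions by \cite[Thm.~A]{strongpos}, this would yield a homogeneous metric with strongly positive curvature on $\Ca P^2$, contradicting \cite[Thm.~C]{strongpos}.

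For sufficiency, I would run the first-order perturbation argument starting from the form $\omega_0$ of Proposition~\ref{prop:strnneg}. Since $\widehat{R}_{W^\bullet}(\vec s,\omega_0)\geq 0$ with $\widehat{R}^3_{W^\bullet}(\vec s,\omega_0)=0$ and $\widehat{R}^2_{W^\bullet}(\vec s,\omega_0)>0$ (by Remark~\ref{rem:secondblocks} using $\prs>0$), it suffices by \cite[Lemma 4.1]{strongpos} to exhibit an $\Ad(\H)$-invariant $4$-form $\omega'$ whose restriction to $K:=\ker\widehat{R}_{W^\bullet}(\vec s,\omega_0)$ is positive-definite; then $\omega_0+\varepsilon\omega'$ works for all sufficiently small $\varepsilon>0$. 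Since $K$ decomposes as the direct sum of the entire third-block subspace and $\ker\widehat{R}^1_{W^\bullet}(\vec s,\omega_0)$, the task reduces to identifying this first-block kernel and matching it against the available parameters in $\omega'$.

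For $W^6$, a direct determinant computation using~\eqref{eq:R1w6} and~\eqref{eq:ar} should show that $\widehat{R}^1_{W^6}(\vec s,\omega_0)$ is strictly positive-definite whenever the $s_r$ are not all equal, so $K$ is just the third-block subspace and any $\omega'$ with $\tilde a_r<0$ suffices. For $W^{12}$, the block-diagonal form~\eqref{eq:R1w12} splits $\widehat{R}^1_{W^{12}}(\vec s,\omega_0)$ into three independent $2\times 2$ blocks; for each singular block with kernel vector $u$, the first-order change of $u^\mathtt t\widehat{R}^1_{W^{12}}u$ depends linearly on $\tilde a_r,\tilde b_{r+1},\tilde b_{r+2}$, and the additional freedom provided by the $b_r$-parameters allows one to make these quantities simultaneously positive while keeping $\tilde a_r<0$, as long as the $s_r$ are not all equal.

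The delicate case is $W^{24}$, where the obstruction materializes. The proof of Proposition~\ref{prop:strnneg} shows that $v$ of~\eqref{eq:specialv} spans $\ker\widehat{R}^1_{W^{24}}(\vec s,\omega_0)$ whenever the $s_r$ are not all equal, and the only free parameters here are the $\tilde a_r$. A direct computation from~\eqref{eq:R1w24} gives that the first-order change of $v^\mathtt t\widehat{R}^1_{W^{24}}v$ equals $-4\sum_r\tilde a_r\,v_{r+1}v_{r+2}$. To simultaneously enforce $\tilde a_r<0$ (needed to lift $\widehat{R}^3_{W^{24}}$ off zero) and make this sum positive, at least one of the three products $v_{r+1}v_{r+2}$ must be strictly positive. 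Assuming WLOG $s_1<s_2<s_3$, one checks $v_1v_3>0$ while $v_1v_2,v_2v_3<0$, so the condition holds precisely under pairwise distinctness; conversely, when two $s_r$ coincide, two of the $v_i$ vanish and the single surviving product is negative, obstructing any admissible perturbation. The main obstacle throughout is the kernel spectroscopy of each $\widehat{R}^1_{W^\bullet}(\vec s,\omega_0)$—especially the positive-definiteness claim for $W^6$ and the enumeration of singular directions for $W^{12}$—but these reduce to finite linear-algebraic verifications using the explicit matrices in Section~\ref{sec:curvops}.
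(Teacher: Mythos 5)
Your proof follows the same first-order perturbation strategy as the paper: start from $\omega_0$, which makes $\widehat{R}^3_{W^\bullet}(\vec s,\omega_0)=0$ and $\widehat{R}^2_{W^\bullet}(\vec s,\omega_0)>0$, then perturb by an invariant $4$-form positive on $\ker\widehat{R}_{W^\bullet}(\vec s,\omega_0)$, with the necessity direction handled identically (via Proposition~\ref{prop:modulisec} plus the Riemannian submersion $(W^{24},\g_{\vec s})\to\Ca P^2$ when two $s_r$ coincide). The one organizational difference is that you treat $W^6$ directly by asserting $\widehat{R}^1_{W^6}(\vec s,\omega_0)$ is positive-definite once the $s_r$ are not all equal --- a true statement, but one requiring its own determinant computation that you do not carry out --- whereas the paper sidesteps this by restricting from $W^{12}$ via the totally geodesic embedding \eqref{eq:totgeod}. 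One small slip in the $W^{24}$ discussion: when exactly two $s_r$ coincide, only \emph{one} of the entries $v_i$ in \eqref{eq:specialv} vanishes, not two; this nevertheless kills two of the three products $v_{r+1}v_{r+2}$, so the single surviving product is negative and your conclusion stands unchanged.
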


\begin{proof}
Suppose $(W^\bullet,\g_{\vec s})$ has strongly positive curvature, hence also $\sec>0$. From Proposition~\ref{prop:modulisec}, it follows that $\prs>0$, $r=1,2,3$ and $s_r$ are not all equal.\footnote{
This implication can be proved independently of Proposition~\ref{prop:modulisec}. Indeed, positive-definiteness of the second and third blocks $\widehat{R}^2_{W^\bullet}(\vec s, \omega)$ and $\widehat{R}^3_{W^\bullet}(\vec s, \omega)$ directly implies that $\prs>0$, $r=1,2,3$, cf.\ Remark~\ref{rem:secondblocks}; and simultaneous positive-definiteness of the first block $\widehat{R}^1_{W^\bullet}(\vec s, \omega)$ is impossible if the $s_r$ are all equal, since its determinant is negative.}
Furthermore, in the case of $W^{24}$, if any two $s_r$ coincide, then there exists a Riemannian submersion $(W^{24},\g_{\vec s})\to(\Ca P^2,\check\g)$, where $\check\g$ is a multiple of the standard metric (see Section~\ref{sec:flags}). Since Riemannian submersions preserve strongly positive curvature \cite[Thm.\ A]{strongpos}, it would follow that $(\Ca P^2,\check{\g})$ has strongly positive curvature, contradicting \cite[Thm.\ C]{strongpos}. 

For the converse, the case of $(W^6,\g_{\vec s})$ follows directly from that of $(W^{12},\g_{\vec s})$, due to the totally geodesic embeddings \eqref{eq:totgeod}.
Assume that $\prs>0$ and that the $s_r$ are not all equal.
We now show that $\widehat{R}_{W^{12}}(\vec s, \omega)$, and hence the modified curvature operator 
$R+\omega$, are positive-definite for $\omega$ arbitrarily close to $\omega_0$, defined in \eqref{eq:ar}.
Recall that the third block $\widehat{R}_{W^{12}}^3(\vec s, \omega_0)$ vanishes identically, and the second block $\widehat{R}_{W^{12}}^2(\vec s, \omega_0)$ is positive-definite. As to the first block $\widehat{R}_{W^{12}}^1(\vec s, \omega_0)$, note that
\begin{equation}\label{eq:block1w12}
\det\begin{pmatrix}
4s_{r+1} & -\frac{s}{s_r}+2a_r\\
-\frac{s}{s_r}+2a_r& 4s_{r+2}
\end{pmatrix}=\frac{4s}{s_r^2}(s_{r+1}-s_{r+2})^2,
\end{equation}
see \eqref{eq:R1w12} and \eqref{eq:ar}.
If, on the one hand, $s_r$ are pairwise different, then $\widehat{R}^1_{W^{12}}(\vec s, \omega_0)$ is positive-definite. In this case, for any $a'_r<0$, the $4$-form $\phi'=a'_1\phi_1+a'_2\phi_2+a'_3\phi_3$ becomes positive-definite on the subspace of representatives corresponding to $\ker\widehat{R}_{W^{12}}(\vec s, \omega_0)$. Thus, the first-order perturbation $\widehat{R}_{W^{12}}(\vec s, \omega_0 +\varepsilon \phi')$ is positive-definite for sufficiently small $\varepsilon>0$.
If, on the other hand, $s_r\neq s_{r+1}=s_{r+2}$, then the $2\times 2$ matrix in \eqref{eq:block1w12} has kernel spanned by $w:=(1,1)^\mathtt t$. 
The restriction of
\begin{equation*}
\phi'+\psi'=a'_1\phi_1+a'_2\phi_2+a'_3\phi_3+b'_1\psi_1+b'_2\psi_2+b'_3\psi_3
\end{equation*}
to the corresponding subspace of representatives reduces to multiplication by
\begin{equation*}
w^\mathtt t
\begin{pmatrix}
-b'_{r+1} & 2a'_r\\
2a'_r& b'_{r+2}
\end{pmatrix}
w=4a'_r-b'_{r+1}+b'_{r+2}.
\end{equation*}
Setting $a'_r<0$, $b'_{r}=b'_{r+1}=0$ and $b'_{r+2}>-4a'_r$, the above $\phi'+\psi'$ becomes positive-definite on the subspace of representatives corresponding to $\ker\widehat{R}_{W^{12}}(\vec s, \omega_0)$.
Similarly to the previous case, the first-order perturbation $\widehat{R}_{W^{12}}\big(\vec s, \omega_0+\varepsilon\,(\phi'+\psi')\big)$ is positive-definite for sufficiently small $\varepsilon>0$.
Thus, we conclude that $(W^{12},\g_{\vec s})$ has strongly positive curvature.

Finally, for the converse in the case of $(W^{24},\g_{\vec s})$, assume that $\prs>0$ and that the $s_r$ are pairwise distinct. Once more, the third block $\widehat{R}_{W^{24}}^3(\vec s, \omega_0)$ vanishes identically, and the second block $\widehat{R}_{W^{24}}^2(\vec s, \omega_0)$ is positive-definite.
Furthermore, the first block $\widehat{R}_{W^{24}}^1(\vec s, \omega_0)$ is positive-semidefinite, with kernel spanned by the vector $v$ in \eqref{eq:specialv}. 
The restriction of $\zeta'=a'_1\,\zeta_1+a'_2\,\zeta_2+a'_3\,\zeta_3$ to the corresponding subspace of representatives reduces to multiplication by the scalar
\begin{equation*}
v^\mathtt t
\begin{pmatrix}
0 & -2a'_3 & -2a'_2 \\
-2a'_3 & 0 & -2a'_1 \\
-2a'_2 & -2a'_1 & 0
\end{pmatrix}v=\frac{4}{s_1s_2s_3}\sum_{r=1}^3 \big(s_r(s_{r}-s_{r+1})(s_r-s_{r+2})\big)a'_r.
\end{equation*}
If $s_r$ are pairwise different, the product of the coefficients of $a'_r$ in the above sum is
\begin{equation*}
\prod_{r=1}^3 \big(s_r(s_{r}-s_{r+1})(s_r-s_{r+2})\big)=-s_1s_2s_3(s_1-s_2)^2(s_1-s_3)^2(s_2-s_3)^2<0,
\end{equation*}
hence at least one coefficient is negative. Setting the corresponding $a'_r$ to be sufficiently negative and $a'_{r+1},a'_{r+2}<0$  small, the $4$-form $\zeta'$ becomes positive-definite on the subspace of representatives associated to $\ker\widehat{R}_{W^{24}}(\vec s, \omega_0)$. Thus, the first-order perturbation $\widehat{R}_{W^{24}}(\vec s, \omega_0+\varepsilon\,\zeta')$ is positive-definite for sufficiently small $\varepsilon>0$. Therefore, we conclude that $(W^{24},\g_{\vec s})$ has strongly positive curvature.
\end{proof}

\end{document}